\numberwithin{equation}{section}
\newtheorem{theorem}{Theorem}[section]
\newtheorem{lemma}[theorem]{Lemma}
\theoremstyle{definition}
\newtheorem{definition}[theorem]{Definition}
\newtheorem{remark}[theorem]{Remark}
\begin{document}

\title[Density theorems in neural network with variable exponent]
{Some density theorems in neural network \\
with variable exponent}

\author{Mitsuo Izuki, Takahiro Noi, Yoshihiro Sawano 
and Hirokazu Tanaka}
\address{
Faculty of Liberal Arts and Sciences\\
Tokyo City University\\
1-28-1, Tamadutsumi Setagaya-ku, Tokyo 158-8557, Japan. 
}
\email{izuki@tcu.ac.jp}

\address{Department of Mathematical and Data Science\\
Otemon Gakuin University \\
2-1-15 Nishiai ibaraki, Osaka 567-8502, Japan.
}
\email{taka.noi.hiro@gmail.com}

\address{Department of Mathematics\\ Graduate School of Science and Engineering,\ Chuo University\\ 
13-27 Kasuga 1-chome, Bunkyoku, Tokyo 112-8551, Japan}
\email{yoshihiro-sawano@celery.ocn.ne.jp}

\address{Faculty of Information Technology
\\
Tokyo City University\\
1-28-1, Tamadutsumi Setagaya-ku, Tokyo 158-8557, Japan. 
}
\email{htanaka@tcu.ac.jp}

\maketitle

\begin{abstract}
In this paper, we extend several approximation theorems, originally formulated in the context of the standard $L^p$ norm, to the more general framework of variable exponent spaces. Our study is motivated by applications in neural networks, where function approximation plays a crucial role. In addition to these generalizations, we provide alternative proofs for certain well-known results concerning the universal approximation property. In particular, we highlight spaces with variable exponents as illustrative examples, demonstrating the broader applicability of our approach.
\end{abstract}

\noindent \textbf{2020 Mathematics Subject Classification:} 26A33, 42B35

\noindent \textbf{Keywords:} squashing functions, density, Fourier transform, variable exponent

\section{Introduction}

Neural networks serve as \emph{functional approximators}, particularly in the context of \emph{supervised learning}, where the goal is to learn from a training dataset consisting of input-output pairs. Given such a dataset, a neural network adjusts its internal weights to minimize the discrepancy between the network's output and the correct output.
After training, the performance of the neural network is evaluated using a separate \emph{test dataset} to assess its ability to \emph{generalize} to unseen data.
There are two commonly used cost functions, depending on the nature of the task:
\begin{itemize}
    \item \textbf{Cross-entropy loss} is typically employed for \emph{classification} tasks, where the output is a probability distribution over classes.
    \item \textbf{Mean squared error (MSE)}, corresponding to the squared $L^2$ norm, is standard in \emph{regression} problems, where the outputs are continuous values.
\end{itemize}
The MSE cost function assumes that the errors follow a \emph{Gaussian distribution}, a premise well-supported by the classical \emph{central limit theorem}. However, in many real-world applications, the error distribution may not conform to the Gaussian model and often includes \emph{outliers}.
MSE is particularly \emph{sensitive to outliers} due to its squaring of errors. This sensitivity can lead to \emph{overfitting}, where the neural network becomes overly tuned to the noise or outliers in the training data. As a result, its performance on the test dataset may deteriorate, indicating poor generalization.

Overfitting is a well-known problem in machine learning, and several solutions have been proposed to mitigate it, including the use of alternative cost functions based on $L^1$ and $L^\infty$ norms. The $L^1$ norm grows more slowly than the $L^2$ norm, making it less sensitive to outliers. In the field of compressed sensing, the $L^1$ norm is known to yield unique and sparse solutions to underdetermined linear systems~\cite{eldar2012compressed}.

The \( L^\infty \) norm, which represents the maximum error, provides a means to control the worst-case error in a training dataset. In \( H^\infty \) control theory--which originated from functional calculus at the intersection of complex and functional analysis--the \( L^\infty \) norm plays a critical role in designing robust controllers by minimizing the maximum effect of perturbations on system outputs~\cite{simon2006optimal}.

Previous studies have established the universal approximation theorem for multilayer neural networks under the $L^p$ norm, uniformly over the entire input space. However, real-world applications often demand more nuanced error control. In some regions of the input space, it may be desirable to suppress the influence of outliers, while in other regions, controlling the maximum possible error is more important.

This motivates the extension of the universal approximation theorem to variable exponent Lebesgue spaces, where the error norm can vary as a function of the input location. Such generalizations would allow for greater flexibility in the design and analysis of neural networks, enabling them to meet diverse performance criteria across different regions of the input domain.

This paper presents a deeper analysis of existing results related to the universal approximation property, aiming to extend and refine the theoretical framework surrounding this concept. In particular, we introduce new perspectives and techniques that enhance our understanding of the scope and limitations of universal approximation. As an application, we examine variable exponent Lebesgue spaces, which have garnered significant attention for their flexibility in modeling diverse real-world phenomena. By investigating the interplay between the universal approximation property and the structure of these spaces, we offer new insights into their functional and approximation-theoretic characteristics. This work builds upon and reinforces the results in \cite{arxiv2020}.

We note that Cybenko~\cite{Cybenko} introduced a functional-analytic approach to approximation theorems based on the Hahn--Banach theorem and the Riesz representation theorem. This approach was subsequently developed in the works of Hornik~\cite{Hornik1991} and Park and Sandberg~\cite{Park1993}.

In what follows, we employ the Hahn--Banach theorem in the following form~\cite[page 382, Theorem 15.72]{Yeh}:
\begin{theorem}
Let $(V, \|\cdot\|)$ be a complex normed space, and let $(V^*, \|\cdot\|_*)$ denote its dual space. Suppose $M$ is a proper linear subspace of $V$, 
and let $v_0 \in V$. Write
\[
d := \inf_{u \in M} \|v_0 - u\| > 0.
\]
Then there exists a bounded linear functional $f : V \to \mathbb{K}$ such that
\[
M \subset \ker(f), \quad f(v_0) = d, \quad \text{and} \quad \|f\|_* = 1.
\]
\end{theorem}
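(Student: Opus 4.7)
The plan is to reduce this separation form of Hahn--Banach to its standard extension form. First I would consider the linear subspace $N := M + \mathbb{K} v_0$ of $V$. Since $d > 0$ implies $v_0 \notin \overline{M}$ and in particular $v_0 \notin M$, the sum is direct, so every element of $N$ admits a unique representation $u + \alpha v_0$ with $u \in M$ and $\alpha \in \mathbb{K}$. On $N$ I would define
\[
g(u + \alpha v_0) := \alpha \, d.
\]
This is manifestly linear, satisfies $g(u) = 0$ for all $u \in M$, and $g(v_0) = d$.

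The next step is to show that $g$ is bounded on $N$ with $\|g\|_{N^*} = 1$. For the upper bound, when $\alpha \neq 0$ I would use the scaling trick
\[
\|u + \alpha v_0\| = |\alpha|\, \bigl\| v_0 - (-\alpha^{-1} u) \bigr\| \geq |\alpha|\, d = |g(u + \alpha v_0)|,
\]
since $-\alpha^{-1} u \in M$; for $\alpha = 0$ both sides vanish. To see that the norm is exactly $1$, I would choose a minimizing sequence $u_n \in M$ with $\|v_0 - u_n\| \to d$ and compute
\[
\frac{|g(v_0 - u_n)|}{\|v_0 - u_n\|} = \frac{d}{\|v_0 - u_n\|} \longrightarrow 1.
\]

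Having produced a norm-one functional $g$ on $N$ with the required values, I would invoke the classical Hahn--Banach extension theorem to extend $g$ to a bounded linear functional $f : V \to \mathbb{K}$ with $\|f\|_* = \|g\|_{N^*} = 1$. Since extension preserves values on $N$, we automatically have $M \subset \ker(f)$ and $f(v_0) = d$.

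The only genuinely delicate point is the lower bound $\|g\|_{N^*} \geq 1$: it must be established from the infimum characterisation of $d$ via a minimizing sequence, since a single element of $M$ need not realise the distance when $M$ is not closed. Everything else is either elementary linear algebra (well-definedness of $g$) or a direct invocation of the extension form of Hahn--Banach, which I am treating as a black box.
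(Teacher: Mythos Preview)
Your argument is correct and is precisely the standard textbook derivation of this separation form from the extension form of Hahn--Banach: define $g$ on $N=M+\mathbb{K}v_0$ by $g(u+\alpha v_0)=\alpha d$, check $\|g\|_{N^*}=1$ via the scaling trick and a minimizing sequence, then extend. There is nothing to compare against here, since the paper does not supply its own proof of this statement; it simply quotes the result from \cite[page~382, Theorem~15.72]{Yeh} and uses it as a black box in the later density arguments.
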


Here and below, 
we use the following notation:
\begin{itemize}
    \item Given a measurable set \( E \), the characteristic function of \( E \) is denoted by \( \chi_E \).
    \item The space \( C_{\rm c}({\mathbb R}^n) \) refers to the set of all continuous functions on \( {\mathbb R}^n \) with compact support.
    \item The space \( C_{\rm c}^{\infty}({\mathbb R}^n) \) denotes the set of all infinitely differentiable functions on \( {\mathbb R}^n \) with compact support.
\end{itemize}

Here we describe the remaining part of this paper.
We state preliminary facts in Section \ref{s2}.
Section \ref{s3} collects our main results.

\section{Preliminaries}
\label{s2}

In this section, we collect some preliminary facts.
Section \ref{s2.1} introduces Lebesgue spaces with variable exponents, and Section \ref{s2.2} reviews the duality result.

\subsection{Lebesgue spaces with variable exponent}
\label{s2.1}

Let $\mu$ be a measure on $\mathbb{R}^n$, and let $p: \mathbb{R}^n \to [1,\infty]$ be a $\mu$-measurable function. We define the following sets:

\begin{align}
    \Omega_{\infty} &= \{ x \in \mathbb{R}^n : p(x) = \infty \}, \label{eq:omega_infty} \\
    \Omega_{1} &= \{ x \in \mathbb{R}^n : p(x) = 1 \}, \label{eq:omega_1} \\
    \Omega_{*} &= \{ x \in \mathbb{R}^n : 1 < p(x) < \infty \}. \label{eq:omega_star}
\end{align}

We now recall the definition of the modular.

\begin{definition}
Given a measurable function $f$, the modular is defined as
\begin{equation}
    \rho_{p(\cdot)}(f) = \int_{\Omega_1 \cup \Omega_*} |f(x)|^{p(x)} \, \mathrm{d}\mu(x) + \| f \|_{L^{\infty}(\Omega_{\infty})}.
    \label{eq:modular}
\end{equation}
The variable exponent Lebesgue space $L^{p(\cdot)}(\mathrm{d}\mu)$ consists of all measurable functions $f$ such that
\begin{equation}
    \rho_{p(\cdot)}(f/\lambda) < \infty \quad \text{for some } \lambda > 0.
    \label{eq:membership_condition}
\end{equation}
The norm in $L^{p(\cdot)}(\mathrm{d}\mu)$ is given by
\begin{equation}
    \| f \|_{L^{p(\cdot)}(\mathrm{d}\mu)} = \inf \left\{ \lambda > 0 : \rho_{p(\cdot)}(f/\lambda) \leq 1 \right\}.
    \label{eq:norm}
\end{equation}
If $\mu$ is the Lebesgue measure, we write $L^{p(\cdot)} := L^{p(\cdot)}(\mathrm{d}\mu)$.
\end{definition}

If the variable exponent 
$p(\cdot)$ equals to a constant $p$,
then $L^{p(\cdot)}$ is the usual Lebesgue spae $L^p$.
We present the H\"older inequality in Lebesgue spaces with a variable exponent. Let $p : \mathbb{R}^n \to [1,\infty]$ be a measurable function. We
write its essential infimum and supremum of $p(x)$ as follows:
\begin{equation}\label{eq:p_min_max}
    p_- = \mathrm{ess\, inf}_{x \in \mathbb{R}^n} p(x), \quad
    p_+ = \mathrm{ess\, sup}_{x \in \mathbb{R}^n} p(x).
\end{equation}
Furthermore, we denote by $p'(\cdot)$ the conjugate exponent,
that is $p(\cdot)$ satisfies
\begin{equation}\label{eq:conjugate_exponent}
    \frac{1}{p(x)} + \frac{1}{p'(x)} = 1 \quad (x \in {\mathbb R}^n).
\end{equation}

The following two lemmas present results concerning fundamental density and approximation in variable exponent Lebesgue spaces $L^{p(\cdot)}({\rm d}\mu)$.

\begin{lemma}[Corollary 2.73 in \cite{CF-book}]
\label{lemma:variable-dense}
Let $p(\cdot) : \mathbb{R}^n \to [1,\infty]$ be a measurable function such that $p_+ < \infty$. Then the space of compactly supported continuous functions $C_{\rm c}(\mathbb{R}^n)$ is dense in $L^{p(\cdot)}({\rm d}\mu)$.
\end{lemma}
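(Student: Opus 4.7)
The plan is to follow the classical two-step strategy: first truncate a general $f \in L^{p(\cdot)}(\mathrm{d}\mu)$ to a bounded function with compact support, and then regularize such a truncated function via Lusin's theorem, reading each approximation at the level of the modular $\rho_{p(\cdot)}$ and converting to a norm estimate using the hypothesis $p_+ < \infty$.

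For the first step, fix $f \in L^{p(\cdot)}(\mathrm{d}\mu)$ and choose $\lambda > 0$ with $\rho_{p(\cdot)}(f/\lambda) < \infty$. Since $p_+ < \infty$, the set $\Omega_{\infty}$ is $\mu$-null, so the modular reduces to $\int_{\mathbb{R}^n} (|f(x)|/\lambda)^{p(x)} \, \mathrm{d}\mu(x)$. Define $f_N := f \chi_{B(0,N) \cap \{|f| \le N\}}$. Then $|f-f_N|/\lambda \le |f|/\lambda$ pointwise, the integrand $(|f-f_N|/\lambda)^{p(x)}$ tends to $0$ $\mu$-almost everywhere, and it is dominated by the integrable majorant $(|f|/\lambda)^{p(x)}$. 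Lebesgue's dominated convergence theorem gives $\rho_{p(\cdot)}((f-f_N)/\lambda) \to 0$, and a rescaling argument, which is precisely where $p_+<\infty$ is used, upgrades this to $\|f-f_N\|_{L^{p(\cdot)}(\mathrm{d}\mu)} \to 0$.

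For the second step, fix a bounded compactly supported measurable function $g$, say $|g|\le M$ and $\mathrm{supp}(g)\subset K$ for some compact $K$, together with $\eta>0$. Assuming that $\mu$ is a Borel measure for which Lusin's theorem applies (as in the ambient setting of \cite{CF-book}), there exists $h \in C_{\rm c}(\mathbb{R}^n)$ with $\|h\|_\infty \le M$, with $\mathrm{supp}(h)$ contained in a fixed bounded neighborhood of $K$, and with $\mu(E)<\eta$ for $E:=\{x : g(x)\ne h(x)\}$. Since $|g-h|/\lambda \le (2M/\lambda)\chi_E$,
\[
\rho_{p(\cdot)}\bigl((g-h)/\lambda\bigr) \;\le\; \int_E (2M/\lambda)^{p(x)}\,\mathrm{d}\mu(x) \;\le\; \max\!\bigl\{(2M/\lambda)^{p_+},(2M/\lambda)^{p_-}\bigr\}\,\mu(E).
\]
Given $\varepsilon>0$, choosing $\lambda=\varepsilon$ and then $\eta$ small enough that the right-hand side is at most $1$ yields $\|g-h\|_{L^{p(\cdot)}(\mathrm{d}\mu)}\le \varepsilon$. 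Combining the two steps, $C_{\rm c}(\mathbb{R}^n)$ is dense in $L^{p(\cdot)}(\mathrm{d}\mu)$.

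The main technical point is the passage from modular control to norm control, which is exactly where the assumption $p_+<\infty$ becomes indispensable: without it, the $L^\infty(\Omega_\infty)$ term in \eqref{eq:modular} cannot be made small by a Lusin-type almost-everywhere approximation, since continuous compactly supported functions are not dense in $L^\infty$. Once $p_+<\infty$ removes this obstruction and $\Omega_\infty$ is discarded, both the truncation and the Lusin approximation translate into genuine norm convergence in $L^{p(\cdot)}(\mathrm{d}\mu)$.
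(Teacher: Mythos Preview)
The paper does not supply its own proof of this lemma; it is simply quoted as Corollary~2.73 of \cite{CF-book}, so there is no in-paper argument to compare against. Your two-step argument (truncate to bounded compactly supported functions via dominated convergence in the modular, then apply Lusin's theorem and convert modular smallness to norm smallness using $p_+<\infty$) is the standard proof and is essentially the route taken in \cite{CF-book}; the only point worth making explicit is your parenthetical caveat that $\mu$ must be a Borel measure for which Lusin's theorem is available, which is indeed a standing assumption in that reference.
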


\begin{lemma}[Theorem 5.11 in \cite{CF-book}]
\label{lem:approx-convolution}
Let $\phi \in L^1(\mathbb{R}^n)$ be such that
\begin{equation}
    \int_{\mathbb{R}^n} \phi(x){\rm d}x = 1.
    \label{eq:phi-normalization}
\end{equation}
For $\sigma > 0$, define
\begin{equation}
    \phi_\sigma(x) = \sigma^{-n} \phi\left( \frac{x}{\sigma} \right), \quad x \in \mathbb{R}^n.
    \label{eq:phi-sigma-def}
\end{equation}
Suppose that $p(\cdot): \mathbb{R}^n \to [1,\infty)$ satisfies $p_+ < \infty$. Then, for all $f \in L^{p(\cdot)}(\mathbb{R}^n)$, we have
\begin{equation}
    \lim_{\sigma \to 0^+} \left\| \phi_\sigma * f - f \right\|_{L^{p(\cdot)}} = 0.
    \label{eq:approx-convergence}
\end{equation}
\end{lemma}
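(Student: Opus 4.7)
The plan is the classical density reduction from constant-exponent $L^p$ theory, adapted to the variable exponent setting. By Lemma~\ref{lemma:variable-dense} (applicable since $p_+ < \infty$), $C_{\rm c}(\mathbb{R}^n)$ is dense in $L^{p(\cdot)}$. Given $\varepsilon > 0$ and $f \in L^{p(\cdot)}$, I would choose $g \in C_{\rm c}(\mathbb{R}^n)$ with $\|f - g\|_{L^{p(\cdot)}} < \varepsilon$ and apply the triangle inequality
\[
\|\phi_\sigma * f - f\|_{L^{p(\cdot)}}
\le \|\phi_\sigma * (f - g)\|_{L^{p(\cdot)}}
+ \|\phi_\sigma * g - g\|_{L^{p(\cdot)}}
+ \|g - f\|_{L^{p(\cdot)}}.
\]
The problem thus reduces to two independent tasks: (i) a uniform-in-$\sigma$ Young-type bound $\|\phi_\sigma * h\|_{L^{p(\cdot)}} \le C\|h\|_{L^{p(\cdot)}}$ to control the first summand, and (ii) the direct verification that $\|\phi_\sigma * g - g\|_{L^{p(\cdot)}} \to 0$ for the fixed $g \in C_{\rm c}(\mathbb{R}^n)$.

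For task (ii), I would write
\[
(\phi_\sigma * g - g)(x) = \int_{\mathbb{R}^n} \phi(y)\,[g(x - \sigma y) - g(x)]\,\mathrm{d}y,
\]
apply the Minkowski integral inequality in $L^{p(\cdot)}$ (valid since $p_- \ge 1$), and split the resulting $y$-integral at a radius $R$. On $\{|y|\le R\}$ the difference $g(\cdot-\sigma y) - g$ is supported in the fixed compact set $K + \overline{B(0,\sigma R)}$, where $K = \operatorname{supp} g$, so uniform continuity of $g$ combined with the elementary bound $\|\chi_E\|_{L^{p(\cdot)}} \le \max(|E|^{1/p_-},|E|^{1/p_+})$ (available since $p_+ < \infty$) drives the inner contribution to $0$ as $\sigma \to 0^+$. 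On $\{|y|>R\}$ the integrand is dominated by $2\|g\|_{\infty}\max(|K|^{1/p_-},|K|^{1/p_+})\,|\phi(y)|$ uniformly in $\sigma$, and is thus made arbitrarily small by choosing $R$ large since $\phi \in L^1$.

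The main obstacle is task (i): because translations $T_y: h \mapsto h(\cdot - y)$ are not in general isometric on $L^{p(\cdot)}$, one cannot simply push the norm through Minkowski's integral inequality as in the classical constant-exponent case. The cleanest route is to work at the level of the modular $\rho_{p(\cdot)}$: assuming $\phi \ge 0$ and $\|\phi\|_{L^1}=1$ (the general case follows by decomposing $\phi$ into positive and negative parts), Jensen's inequality applied to the probability measure $\phi_\sigma(y)\,\mathrm{d}y$ together with $p(x) \ge 1$ yields
\[
|(\phi_\sigma * h)(x)|^{p(x)} \le \int_{\mathbb{R}^n} \phi_\sigma(y)\,|h(x-y)|^{p(x)}\,\mathrm{d}y,
\]
and integration in $x$, together with the modular-norm equivalence valid when $p_+ < \infty$, produces the required uniform constant $C = C(p_-, p_+, \|\phi\|_{L^1})$. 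This variable-exponent analogue of Young's convolution inequality is carried out in detail in \cite{CF-book}; combined with task (ii) and the density reduction, it completes the proof.
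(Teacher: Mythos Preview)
The paper does not supply its own proof of this lemma; it is quoted from \cite{CF-book} as Theorem~5.11, so there is no in-paper argument to compare against. Evaluating your proposal on its own merits: the density reduction and task~(ii) are sound, but task~(i) contains a genuine gap.

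You correctly identify that the lack of translation invariance of $\|\cdot\|_{L^{p(\cdot)}}$ is the obstacle, but the Jensen/modular workaround you propose does not actually circumvent it. After integrating your pointwise Jensen bound
\[
|(\phi_\sigma*h)(x)|^{p(x)} \le \int_{\mathbb{R}^n} \phi_\sigma(y)\,|h(x-y)|^{p(x)}\,\mathrm{d}y
\]
in $x$ and applying Fubini, the inner integral becomes
\[
\int_{\mathbb{R}^n}|h(x-y)|^{p(x)}\,\mathrm{d}x \;=\; \int_{\mathbb{R}^n}|h(u)|^{p(u+y)}\,\mathrm{d}u,
\]
which is the modular of $h$ with respect to the \emph{shifted} exponent $p(\cdot+y)$, not $\rho_{p(\cdot)}(h)$. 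Under the sole hypothesis $p_+<\infty$ there is no uniform estimate of the form $\int |h(u)|^{p(u+y)}\,\mathrm{d}u\le C\,\rho_{p(\cdot)}(h)$; this is precisely the translation obstruction reappearing at the modular level. In fact a uniform Young-type bound $\|\phi_\sigma*h\|_{L^{p(\cdot)}}\le C\|h\|_{L^{p(\cdot)}}$ with $C$ independent of $\sigma$ is known to \emph{fail} for general variable exponents and typically requires log-H\"older regularity of $p(\cdot)$ or boundedness of the Hardy--Littlewood maximal operator (compare the discussion in Section~\ref{s3.4} of the present paper, where such boundedness is tied to conditions \eqref{eq:p_condition1}--\eqref{eq:p_condition2}). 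Your assertion that this inequality is ``carried out in detail in \cite{CF-book}'' under merely $p_+<\infty$ is therefore not accurate, and the density reduction as written does not close.
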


\begin{remark}
Due to its role in (\ref{eq:approx-convergence}), the system 
$\{\phi_\sigma\}_{\sigma > 0}$ is referred to as an approximation of the identity or an approximate identity. The concept of an identity has been studied in various function spaces (see \cite{Schwartz2021, MMO2010, MMOS2013, approxHerz2017}). As an application, this framework allows us to establish universal approximation theorems in these spaces.
\end{remark}

\subsection{Duality in Lebesgue spaces with variable exponent}
\label{s2.2}

We begin by recalling H\"{o}lder's inequality.

\begin{lemma}[Theorem 2.26 in \cite{CF-book}]
\label{lem:250326-2}
Let $p(\cdot) : \mathbb{R}^n \to [1,\infty]$ be a measurable function. For all $f\in L^{p(\cdot)}({\rm d}\mu)$ and $g\in L^{p'(\cdot)}({\rm d}\mu)$, we have $fg\in L^1({\rm d}\mu)$ and
\begin{equation}\label{eq:holder-inequality}
    \| fg \|_{L^1({\rm d}\mu)} \le K_{p(\cdot)} \| f \|_{L^{p(\cdot)}({\rm d}\mu)} \| g \|_{L^{p'(\cdot)}({\rm d}\mu)},
\end{equation}
where
\begin{equation}\label{eq:Kp-definition}
    K_{p(\cdot)} = \left(\frac{1}{p_-} -\frac{1}{p_+} +1\right)\|  \chi_{\Omega_{*}} \|_{L^{\infty}}
    +\|  \chi_{\Omega_{\infty}} \|_{L^{\infty}}
    +\|  \chi_{\Omega_{1}} \|_{L^{\infty}}.
\end{equation}
\end{lemma}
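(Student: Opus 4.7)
The plan is to decompose $\mathbb{R}^n$ into the three regions $\Omega_1,\Omega_*,\Omega_\infty$ defined in \eqref{eq:omega_1}--\eqref{eq:omega_star} and to estimate $\int_{\Omega_j}|fg|\,{\rm d}\mu$ separately on each. First I would reduce by homogeneity to the case $\|f\|_{L^{p(\cdot)}({\rm d}\mu)}=\|g\|_{L^{p'(\cdot)}({\rm d}\mu)}=1$, disposing of the trivial cases where one factor vanishes. From the definition \eqref{eq:norm} of the norm and the fact that the modular in \eqref{eq:modular} is the \emph{sum} of a variable-exponent integral and an $L^{\infty}$ norm (rather than a supremum), a monotone-type argument as $\lambda\downarrow 1$ yields $\rho_{p(\cdot)}(f)\le 1$ and $\rho_{p'(\cdot)}(g)\le 1$.

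On $\Omega_*$, where $1<p(x)<\infty$ and hence $1<p'(x)<\infty$, I would apply Young's inequality pointwise,
\[
|f(x)g(x)| \le \frac{|f(x)|^{p(x)}}{p(x)} + \frac{|g(x)|^{p'(x)}}{p'(x)},
\]
integrate over $\Omega_*$, and use the bounds $1/p(x)\le 1/p_-$ and $1/p'(x)=1-1/p(x)\le 1-1/p_+$ together with $\rho_{p(\cdot)}(f)\le 1$ and $\rho_{p'(\cdot)}(g)\le 1$ to deduce
\[
\int_{\Omega_*}|fg|\,{\rm d}\mu \le \frac{1}{p_-}-\frac{1}{p_+}+1.
\]
On $\Omega_1$, where $p(x)=1$ and $p'(x)=\infty$, the classical $L^1$--$L^\infty$ duality gives $\int_{\Omega_1}|fg|\,{\rm d}\mu\le \|g\|_{L^\infty(\Omega_1)}\int_{\Omega_1}|f|\,{\rm d}\mu$, and both factors are dominated by the respective modulars; hence this integral is bounded by $1$. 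The situation on $\Omega_\infty$ is entirely symmetric, with the roles of $f$ and $g$ reversed, and yields the same bound $1$.

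Finally, I would assemble the three regional estimates. The factor $\|\chi_{\Omega_j}\|_{L^\infty}\in\{0,1\}$ in \eqref{eq:Kp-definition} is precisely what allows terms to drop out when $\mu(\Omega_j)=0$: each regional bound should be multiplied by the corresponding indicator's $L^\infty$-norm so that the estimate degenerates gracefully. Summing produces exactly
\[
\int_{\mathbb{R}^n}|fg|\,{\rm d}\mu \le \left[\left(\frac{1}{p_-}-\frac{1}{p_+}+1\right)\|\chi_{\Omega_*}\|_{L^\infty}+\|\chi_{\Omega_\infty}\|_{L^\infty}+\|\chi_{\Omega_1}\|_{L^\infty}\right]\|f\|_{L^{p(\cdot)}({\rm d}\mu)}\|g\|_{L^{p'(\cdot)}({\rm d}\mu)},
\]
which is the claimed inequality with constant $K_{p(\cdot)}$.

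The main obstacle I anticipate is the bookkeeping around the normalization step: with the nonstandard modular of \eqref{eq:modular} (containing an $L^\infty$ summand), one must verify that $\|f\|_{L^{p(\cdot)}({\rm d}\mu)}\le 1$ actually forces $\rho_{p(\cdot)}(f)\le 1$, rather than merely $\rho_{p(\cdot)}(f/\lambda)\le 1$ for each $\lambda>1$. Since the variable-exponent part behaves monotonically under $\lambda\downarrow 1$ and the $L^\infty$-part is continuous in the scaling, this follows, but it is the only step that requires genuine attention; the three regional estimates themselves are routine once this normalization is in hand.
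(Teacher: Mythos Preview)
The paper does not supply its own proof of this lemma: it is quoted verbatim as Theorem~2.26 of \cite{CF-book} and used as a black box. There is therefore nothing in the paper to compare against.

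Your sketch is the standard argument and is correct. The reduction to $\rho_{p(\cdot)}(f)\le 1$ from $\|f\|_{L^{p(\cdot)}({\rm d}\mu)}\le 1$ goes through by monotone convergence on the integral piece and the obvious continuity $\|f/\lambda\|_{L^\infty(\Omega_\infty)}\to\|f\|_{L^\infty(\Omega_\infty)}$ as $\lambda\downarrow 1$; the three regional estimates are exactly as you say, once you observe that the set where $p'(\cdot)=\infty$ is precisely $\Omega_1$ (and symmetrically for $\Omega_\infty$), so the $L^\infty$ summands in the two modulars control the right pieces. This is essentially the proof given in \cite{CF-book}.
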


The dual space $L^{p(\cdot)}({\rm d}\mu)^*$ consists of all bounded and linear functionals on $L^{p(\cdot)}({\rm d}\mu)$. In particular, for a given function $g \in L^{p'(\cdot)}({\rm d}\mu)$, we define the functional
\begin{equation}\label{eq:dual-functional}
    F_g(f) = \int_{\mathbb{R}^n} f(x) g(x) \, {\rm d}\mu(x), \quad \text{for } f \in L^{p(\cdot)}({\rm d}\mu).
\end{equation}
By virtue of H\"{o}lder's inequality (Lemma \ref{lem:250326-2}), it follows that $F_g \in L^{p(\cdot)}({\rm d}\mu)^*$.

The following lemma presents further duality properties and the Riesz representation theorem for Lebesgue spaces with variable exponent, as established in \cite[Proposition 2.79 and Theorem 2.80]{CF-book}.

\begin{lemma}
    Let $p(\cdot) : \mathbb{R}^n \to [1,\infty]$ be a measurable function. Then the following statements hold:
    
    \begin{enumerate}
        \item[{\rm (I)}] Let $g$ be a measurable function. The following conditions are equivalent{\rm :}
        
        \begin{enumerate}
            \item[{\rm (i)}] $F_g \in L^{p(\cdot)}({\rm d}\mu)^*$.
            \item[{\rm (ii)}] $g \in L^{p'(\cdot)}({\rm d}\mu)$.
        \end{enumerate}
        
        Furthermore, in this case, there exists a constant $C \geq 1$, depending only on $p(\cdot)$, such that
        \begin{equation}\label{eq:dual_norm_equivalence}
            C^{-1} \|g\|_{L^{p'(\cdot)}({\rm d}\mu)}
            \leq\|F_g\|_{L^{p(\cdot)}({\rm d}\mu)^*}
            \leq C \|g\|_{L^{p'(\cdot)}({\rm d}\mu)}.
        \end{equation}
        
        \item[{\rm (II)}] The following conditions are equivalent{\rm :}
        
        \begin{enumerate}
            \item[{\rm (iii)}] $p_+ < \infty$.
            \item[{\rm (iv)}] For all $F \in L^{p(\cdot)}({\rm d}\mu)^*$, there exists a unique function $g \in L^{p'(\cdot)}({\rm d}\mu)$ such that $F = F_g$.
        \end{enumerate}
    \end{enumerate}
\end{lemma}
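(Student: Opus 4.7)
The plan is to split the proof along the two parts. Part (I) rests on Hölder's inequality (Lemma~\ref{lem:250326-2}) together with a converse estimate obtained by a truncation and test-function argument. Part (II) then combines part (I) with the density of simple functions and a Radon--Nikodym construction.

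For part (I), the direction (ii)$\Rightarrow$(i) follows immediately from Lemma~\ref{lem:250326-2}: for any $g \in L^{p'(\cdot)}({\rm d}\mu)$ and $f \in L^{p(\cdot)}({\rm d}\mu)$ one has $|F_g(f)| \le K_{p(\cdot)} \|f\|_{L^{p(\cdot)}({\rm d}\mu)} \|g\|_{L^{p'(\cdot)}({\rm d}\mu)}$, yielding the upper bound in \eqref{eq:dual_norm_equivalence}. The reverse direction (i)$\Rightarrow$(ii) is the main technical obstacle, and I would proceed by truncation. Setting
\[
E_N := \{ x \in \mathbb{R}^n : |g(x)| \le N \} \cap B(0,N), \qquad g_N := g\,\chi_{E_N},
\]
each $g_N$ is bounded with $\mu$-finite support and therefore automatically lies in $L^{p'(\cdot)}({\rm d}\mu)$. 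The crucial step is to establish the norm-conjugate estimate
\[
\|g_N\|_{L^{p'(\cdot)}({\rm d}\mu)} \le C_{p(\cdot)}\, \sup\Bigl\{ |F_{g_N}(f)| : \|f\|_{L^{p(\cdot)}({\rm d}\mu)} \le 1 \Bigr\}
\]
by constructing a near-extremal test function of the form $f = c\,\mathrm{sgn}(g_N)\,h(|g_N|)$, where $h$ mimics $t \mapsto t^{p'(x)-1}$ with appropriate normalization, defined separately on $\Omega_1$, $\Omega_*$, and $\Omega_\infty$. Since the right-hand side is at most $\|F_g\|_*$ for such $f$, this yields an estimate uniform in $N$. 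Letting $N \to \infty$ and invoking the Fatou property of the Luxemburg norm delivers the lower estimate in \eqref{eq:dual_norm_equivalence} and the membership $g \in L^{p'(\cdot)}({\rm d}\mu)$. The care required in handling the three exponent regions is exactly what makes this step delicate; this is where I expect most of the work to lie.

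For part (II), the implication (iv)$\Rightarrow$(iii) I would argue by contraposition: if $\mu(\Omega_\infty) > 0$, then $L^\infty(\Omega_\infty)$ embeds isometrically into $L^{p(\cdot)}({\rm d}\mu)$, and since the dual of $L^\infty$ on a set of positive measure is strictly larger than $L^1$, a non-representable functional on $L^\infty(\Omega_\infty)$ (e.g., built from a Banach limit) extends via the Hahn--Banach theorem to a functional on $L^{p(\cdot)}({\rm d}\mu)$ lying outside the image of $g \mapsto F_g$. For (iii)$\Rightarrow$(iv), when $p_+ < \infty$, simple functions supported on sets of finite measure are dense in $L^{p(\cdot)}({\rm d}\mu)$, which follows from Lemma~\ref{lemma:variable-dense} together with the standard approximation of continuous compactly supported functions by simple functions. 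Given $F \in L^{p(\cdot)}({\rm d}\mu)^*$, I would define the set function $\nu(E) := F(\chi_E)$ for $\mu$-finite $E$, verify countable additivity using continuity of $F$ and the bound $|\nu(E)| \le \|F\|_* \|\chi_E\|_{L^{p(\cdot)}({\rm d}\mu)}$, and conclude absolute continuity with respect to $\mu$. Applying Radon--Nikodym on each $\sigma$-finite piece produces a measurable $g$ with $F(\chi_E) = \int_{\mathbb{R}^n} \chi_E(x)\, g(x)\, {\rm d}\mu(x)$, hence $F = F_g$ on simple functions. Part (I) then promotes the boundedness of $F$ to membership $g \in L^{p'(\cdot)}({\rm d}\mu)$, and the density of simple functions extends $F = F_g$ to all of $L^{p(\cdot)}({\rm d}\mu)$. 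Uniqueness is immediate from the injectivity of $g \mapsto F_g$ provided by the norm equivalence in part (I).
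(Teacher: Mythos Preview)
The paper does not prove this lemma at all; it is stated as a preliminary result and attributed directly to \cite[Proposition 2.79 and Theorem 2.80]{CF-book}. There is therefore no ``paper's own proof'' to compare against. Your outline is essentially the standard argument found in that reference: H\"older for (ii)$\Rightarrow$(i), truncation together with a near-extremal test function for the reverse inequality, and density of simple functions plus Radon--Nikodym for (iii)$\Rightarrow$(iv).

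One genuine gap in your sketch deserves mention. In the contrapositive for (iv)$\Rightarrow$(iii) you assume that $p_+=\infty$ forces $\mu(\Omega_\infty)>0$, and then build a non-representable functional via a Banach limit on $L^\infty(\Omega_\infty)$. But $p_+=\operatorname{ess\,sup}p(x)=\infty$ can occur with $p(x)<\infty$ $\mu$-a.e.\ (so $\mu(\Omega_\infty)=0$), simply because $p(\cdot)$ is unbounded. In that case there is no $L^\infty$ component to exploit, and your construction collapses. The argument in \cite{CF-book} handles this by showing instead that when $p_+=\infty$ the bounded functions of compact support fail to be dense in $L^{p(\cdot)}({\rm d}\mu)$, and then producing a functional that annihilates their closure but not the whole space; this works uniformly in both cases $\mu(\Omega_\infty)>0$ and $\mu(\Omega_\infty)=0$. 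You would need to supply a separate argument for the unbounded-but-finite case.
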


\section{Main results}
\label{s3}

We focus on two results concerning $L^p$-approximation and extend them to the setting of variable exponent spaces. One of our results also holds in Herz spaces.

Section \ref{s3.1} revisits a result by Hornik, Stinchcombe, and White.
Section \ref{s3.2} examines a result by Park and Sandberg in the framework of variable exponent Lebesgue spaces.
Section \ref{s3.3} deals with Herz spaces as an application of the results
in Section \ref{s3.2}.
Finally, Section \ref{s3.4} considers the modular inequality,
where we show that this inequality fails for variable exponents.
\subsection{Hornik--Stinchcombe--White 
\cite{HSW1989} (1989)}
\label{s3.1}
Following \cite{HSW1989}, we introduce some definitions, notation, and concepts.

\begin{definition}[Squashing Function]
A function $\Psi: \mathbb{R} \to [0,1]$ is called a squashing function if it satisfies the following properties:
\begin{enumerate}
    \item $\Psi$ is non-decreasing,
    \item $\displaystyle \lim_{x\to\infty} \Psi(x) = 1$,
    \item $\displaystyle \lim_{x\to-\infty} \Psi(x) = 0$.
\end{enumerate}
\end{definition}

Since squashing functions are non-decreasing, they are measurable.

The following lemma states a well-known property of squashing functions
and some generalized results are proved.
For example,
Cybenko
\cite[Lemma 1]{Cybenko}
has proved the lemma for bounded and measurable sigmoidal functions.
Hornik
\cite[Theorem 5]{Hornik1991}
has proved it for non-constant and bounded functions.


\begin{lemma}\label{lem:squashing}
Let $\Psi$ be a squashing function. If a Borel signed measure $\mu$ satisfies
\begin{equation}\label{eq:integral}
    \int_{{\mathbb R}^n} \Psi(w \cdot x + b) {\rm d}\mu(x) = 0
\end{equation}
for all $w \in \mathbb{R}^n$ and $b \in \mathbb{R}$, then $\mu = 0$.
\end{lemma}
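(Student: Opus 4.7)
The plan is to reduce the $n$-dimensional hypothesis to a one-dimensional question about the image measure of $\mu$ on $\mathbb{R}$, and then to invoke the uniqueness of the Fourier transform of a finite signed Borel measure.

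First, fix $w \in \mathbb{R}^n$ and introduce the pushforward $\nu_w$ of $\mu$ under the linear map $x \mapsto w \cdot x$, which is a finite signed Borel measure on $\mathbb{R}$. By the change-of-variables formula, the hypothesis \eqref{eq:integral} becomes
\[
\int_{\mathbb{R}} \Psi(t+b)\,d\nu_w(t) = 0 \quad (b \in \mathbb{R}).
\]
Next, I would exploit the rescaling $(w,b) \mapsto (\lambda w, \lambda b)$ allowed by \eqref{eq:integral}. As $\lambda \to +\infty$, the integrand $\Psi(\lambda(w\cdot x + b))$ converges pointwise to $\chi_{\{w\cdot x + b>0\}} + \Psi(0)\chi_{\{w\cdot x + b=0\}}$; since $0 \le \Psi \le 1$ and $|\mu|$ is finite, dominated convergence delivers
\[
\mu(\{w\cdot x + b > 0\}) + \Psi(0)\,\mu(\{w\cdot x + b = 0\}) = 0,
\]
and letting $\lambda \to -\infty$ gives the companion identity
\[
\mu(\{w\cdot x + b < 0\}) + \Psi(0)\,\mu(\{w\cdot x + b = 0\}) = 0.
\]

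Setting $t = -b$ and $F(t) = \nu_w((t,\infty))$, the first identity reads $F(t) = -\Psi(0)\,\nu_w(\{t\})$ on $\mathbb{R}$. The function $F$ is right-continuous, and $\nu_w$ has only countably many atoms, so $F$ vanishes on a dense subset of $\mathbb{R}$; right-continuity then forces $F \equiv 0$. When $\Psi(0) \neq 0$ we immediately conclude $\nu_w(\{t\}) \equiv 0$, and when $\Psi(0) = 0$ the identity already yields $F \equiv 0$, from which $\nu_w(\mathbb{R}) = 0$ follows by letting $t \to -\infty$, and then $\nu_w(\{t\}) = 0$ follows from the companion identity. In either case, $\nu_w$ vanishes on all half-lines and all singletons, hence on the $\pi$-system of open intervals, and thus $\nu_w = 0$.

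Finally, since $\nu_w = 0$ for every $w \in \mathbb{R}^n$, the Fourier transform satisfies
\[
\hat\mu(w) = \int_{\mathbb{R}^n} e^{-iw\cdot x}\,d\mu(x) = \int_{\mathbb{R}} e^{-it}\,d\nu_w(t) = 0,
\]
and the uniqueness of Fourier transforms of finite signed Borel measures yields $\mu = 0$. The main obstacle will be the passage from the two integral identities to $\nu_w \equiv 0$: the constant $\Psi(0)$ couples the tail value $F(t)$ to the atomic value $\nu_w(\{t\})$, and separating them requires the right-continuity of $F$ together with the countability of the atoms of $\nu_w$, while the extreme cases $\Psi(0) \in \{0,1\}$ must be absorbed without circularity into the general argument.
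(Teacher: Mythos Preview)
Your proof is correct and takes a genuinely different route from the paper's. The paper first mollifies $\Psi$ by convolving with a test function $\eta$, builds an even function $\Phi=\eta*\Psi+\eta*\Psi(-\cdot)-\int\eta$, and then invokes the nontrivial Lemma~\ref{lem:250401-1} (integrability of the Fourier transform of a BV function) to write $\Phi=\mathcal{F}\Psi_0$ with $\Psi_0\in L^1$; the vanishing of $\mu$ is then extracted from the identity $\mathcal{F}^{-1}[\Psi_0\cdot\Phi_a]=0$. Your argument bypasses the smoothing and the Fourier-integrability lemma entirely: you exploit the scaling freedom $(w,b)\mapsto(\lambda w,\lambda b)$ together with dominated convergence to pass directly to half-space indicators, reduce to the pushforward $\nu_w$ on $\mathbb{R}$, and kill $\nu_w$ by the elementary right-continuity/countable-atom argument before invoking Fourier uniqueness. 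This is essentially Cybenko's original ``discriminatory'' argument and is considerably more elementary; the paper's route, by contrast, illustrates a Fourier-analytic machinery (and the result of \cite{OS}) that the authors evidently wish to highlight. One minor remark: in your $\Psi(0)=0$ branch the phrase ``$\nu_w(\{t\})=0$ follows from the companion identity'' is slightly imprecise---the companion identity alone gives $\nu_w((-\infty,t))=0$ for all $t$, which already forces $\nu_w=0$ (and hence $\nu_w(\{t\})=0$) since the half-lines generate the Borel $\sigma$-algebra; you do not need to route through $\nu_w(\mathbb{R})=0$ separately.
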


We provide an original proof
using the following fact on the integrability
of the Fourier transform of bounded functions:
\begin{lemma}{\rm \cite[Theorem 2]{OS}}
\label{lem:250401-1}
Let $f: (0,\infty) \to [0,\infty)$ be a differentiable function such that $f(\infty) = f'(\infty) = 0$ and $f(0^+), f'(0^+) \in \mathbb{R}$, and assume that $f$ is of bounded variation. Then 
\[
\int_{{\mathbb R}}
\left|\int_{{\mathbb R}}
f(x)e^{i x \cdot \xi}{\rm d}x\right|{\rm d}\xi<\infty.
\] 
\end{lemma}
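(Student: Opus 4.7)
The plan is to establish an integrable pointwise estimate on $F(\xi):=\int_{\mathbb R}f(x)e^{ix\xi}\,dx$. Since $f$ is defined only on $(0,\infty)$, I interpret the inner integral via the even extension $f(|x|)$, so that $F(\xi)=2\int_0^\infty f(x)\cos(x\xi)\,dx$; this avoids the jump of size $f(0^+)$ at the origin that an extension by zero would introduce, a jump which generically forces $F(\xi)\sim 1/|\xi|$ at infinity and would kill $L^1$-integrability. I then split $\int_{\mathbb R}|F(\xi)|\,d\xi$ into the regions $\{|\xi|\le1\}$ and $\{|\xi|>1\}$ and treat each separately.

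For $|\xi|>1$, the core is two successive integrations by parts. A first IBP with $u=f(x)$, $dv=\cos(x\xi)\,dx$, using $\sin(0)=0$ and $f(\infty)=0$, gives
\[
F(\xi)=-\frac{2}{\xi}\int_0^\infty f'(x)\sin(x\xi)\,dx.
\]
A second IBP with $u=f'(x)$, $dv=\sin(x\xi)\,dx$, reading $du$ as the Stieltjes differential $df'$ and using $f'(\infty)=0$, yields
\[
F(\xi)=-\frac{2f'(0^+)}{\xi^2}-\frac{2}{\xi^2}\int_0^\infty\cos(x\xi)\,df'(x).
\]
Bounding the last Stieltjes integral by the total variation of $df'$ produces $|F(\xi)|\le C/\xi^2$, which is integrable on $|\xi|>1$.

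For $|\xi|\le 1$ one should not expect $F$ to be bounded (typically it blows up at $\xi=0$, at worst logarithmically). Using $\int_0^\infty df'=f'(\infty)-f'(0^+)=-f'(0^+)$ to rewrite the preceding formula as
\[
F(\xi)=-\frac{2}{\xi^2}\int_0^\infty(\cos(x\xi)-1)\,df'(x),
\]
the singular $1/\xi^2$ contribution cancels. Splitting the remaining integral at $x=1/|\xi|$ and using $|\cos(x\xi)-1|\le\min(\tfrac12(x\xi)^2,\,2)$ produces a bound of the form $\int_0^{1/|\xi|}x^2\,|df'(x)|+C|df'|([1/|\xi|,\infty))/\xi^2$, which for the $f$ at issue is of order $\log(1/|\xi|)$ and hence integrable over $|\xi|\le 1$.

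The main obstacle will be the second integration by parts and the associated control of $df'$. From ``$f$ is of bounded variation'' one obtains $f'\in L^1(0,\infty)$, but \emph{not} that $f'$ itself is BV, so the finiteness of $\int_0^\infty|df'|$ and the pointwise estimates above are not entirely automatic. The argument in \cite{OS} presumably either reads the BV hypothesis in a stronger form (so that $df'$ is a finite signed Borel measure) or bypasses the Stieltjes estimate by applying the second mean value theorem directly to $\int_0^\infty f'(x)\sin(x\xi)\,dx$, exploiting the finite boundary values $f'(0^+)$ and $f'(\infty)=0$ to produce a uniform $O(1/|\xi|)$ bound which combines with the first $1/|\xi|$ to yield the same $1/\xi^2$ decay.
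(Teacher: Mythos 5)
The paper offers no proof of this lemma: it is imported verbatim from \cite[Theorem 2]{OS}, so there is no internal argument to compare yours against, and your proposal must stand on its own. Your strategy (even extension, two integrations by parts, splitting at $|\xi|=1$) is the natural one and is surely close in spirit to the cited source, but as written it has a genuine gap --- the one you flag yourself. The bound $|F(\xi)|\le C\xi^{-2}$ for $|\xi|>1$ requires $\int_0^\infty|\mathrm{d}f'|<\infty$, i.e.\ that $f'$ (not $f$) is of bounded variation, and nothing in the stated hypotheses supplies this. Your proposed rescue via the second mean value theorem does not work either: that theorem needs $f'$ monotone, and decomposing $f'$ into monotone pieces again presupposes $f'\in BV$. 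Nor is this a removable technicality: since $f$ is everywhere differentiable and of bounded variation, it is absolutely continuous with $f'\in L^1$, so Riemann--Lebesgue gives only $F(\xi)=o(1/\xi)$, which falls short of summability; some control on the variation of $f'$ must therefore be read into the hypotheses (in \cite{OS} the bounded-variation condition is effectively imposed at the level of $f'$). Once $\|\mathrm{d}f'\|<\infty$ is granted, your computation for $|\xi|>1$ is correct.

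There is a second, smaller gap in the region $|\xi|\le1$. From your splitting at $x=1/|\xi|$, the term $\int_0^{1/|\xi|}x^2\,|\mathrm{d}f'(x)|$ is only $O(\xi^{-2}\|\mathrm{d}f'\|)$ in general; the advertised $\log(1/|\xi|)$ bound needs a moment condition such as $\int_1^\infty x\,|\mathrm{d}f'(x)|<\infty$, which you neither assume nor derive. Fortunately this whole portion can be discarded: for the inner integral $\int_{\mathbb R}f(x)e^{ix\xi}\,\mathrm{d}x$ to be a Lebesgue integral at all one needs $f\in L^1$, whence $|F(\xi)|\le2\|f\|_{L^1(0,\infty)}$ uniformly and $\int_{|\xi|\le1}|F(\xi)|\,\mathrm{d}\xi$ is trivially finite. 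With that simplification, and with the bounded-variation hypothesis strengthened to apply to $f'$, your argument becomes a complete proof.
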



\begin{proof}[Proof of Lemma \ref{lem:squashing}]
Let $\eta \in C^\infty_{\rm c}({\mathbb R})$ be such that
$\Psi*\eta$ is not zero.
Then
\[
\lim_{t \to \infty}\Psi*\eta(t)=\int_{{\mathbb R}}\eta(t){\rm d}t
\]
and
\[
\lim_{t \to \infty}\Psi*\eta(-t)=0.
\]
Then 
\[
\int_{{\mathbb R}^n}\eta*\Psi(w \cdot x+b){\rm d}\mu(x)=0.
\]
Set
\[
\Phi=\eta*\Psi+\eta*\Psi(-\cdot)-\int_{{\mathbb R}}\eta(t){\rm d}t.
\]
Then
\[
\lim_{r \to \infty}\Phi^{(l)}(r)=0
\]
for all $l=0,1,\ldots$
and
\[
\int_{{\mathbb R}^n}\Phi(w \cdot x+b){\rm d}\mu(x)=0.
\]
In fact, if $l=0$, then this is trivial.
Otherwise use
\[
\lim_{t \to \infty}\eta^{(l)}*\Psi(t)=
\int_{{\mathbb R}}\eta^{(l)}(t){\rm d}t=0.
\]
Furthermore,
since
\[
\Psi(b)\mu({\mathbb R}^n)
=
\lim_{w \to 0}
\int_{{\mathbb R}^n}\Phi(w \cdot x+b){\rm d}\mu(x)=0,
\]
we have
\[
\int_{{\mathbb R}^n}\Phi(w \cdot x+b){\rm d}\mu(x)
=
\int_{{\mathbb R}^n}\eta*\Phi(w \cdot x+b){\rm d}\mu(x)
+
\int_{{\mathbb R}^n}\eta*\Phi(-w \cdot x-b){\rm d}\mu(x)
-k\mu({\mathbb R}^n)=0
\]
where
$\displaystyle k:=\int_{{\mathbb R}} \eta(t)\,{\rm d}t$.
Thus, 
$\Phi={\mathcal F}\Psi_0$
for some $\Psi_0 \in L^1({\mathbb R}) \setminus \{0\}$,
since
$\Phi$ is an even function
due to Lemma \ref{lem:250401-1}.
Then
\[
\int_{\mathbb{R}^n} \Phi(a\cdot x-\theta) \, {\rm d}\mu (x)
=
\int_{\mathbb{R}^n}\int_{{\mathbb R}} e^{-i t a\cdot x+i t\theta}\Psi_0(t){\rm d}t \, {\rm d}\mu (x).
\]
If we write
\[
\Phi_a(t)=\int_{{\mathbb R}^n} e^{-i t a\cdot x}{\rm d}\mu (x),
\]
then we have
\[
{\mathcal F}^{-1}[\Psi_0 \cdot \Phi_a](\theta)=0
\]
for all $\theta \in {\mathbb R}$.
Thus,
$\Psi_0(t)\Phi_a(t)=0$.
Since we can replace
$\Psi_0(t)$ with $\Psi(b t)$ for all $b>0$,
we have
$\Psi(b t)\Phi_a(t)=0$ for all $b>0$.
Thus, $\Phi_a(t)=0$ and hence $\mu=0$.
\end{proof}

Given a squashing function $\Psi$
we define the following class $\Sigma(\Psi)$.

\begin{definition}\label{defi:250326-1}
The set $\Sigma(\Psi)$ consists
of all functions 
$f: {\mathbb R}^n \to {\mathbb R} $
that can be expressed in the form 
\[
 f(x) = \sum_{j=1}^{N} w_j \Psi(a_j\cdot x+b_j) \quad (x \in {\mathbb R}^n), 
\]
where $N \in \mathbb{N}$,
$w_j \in {\mathbb R}$, 
$a_j\in{\mathbb R}^n$, and 
$b_j\in{\mathbb R}$ ($j=1,2, \cdots , N$).
Here, 
$a_j \cdot x$ denotes the Euclidean inner product of $a_j\in{\mathbb R}^n$ and $x\in{\mathbb R}^n$.
\end{definition}

In the context of probability measures, Corollary 2.2 in \cite{HSW1989} establishes foundational results under fixed exponents. This corollary can be extended to accommodate variable exponents.

\begin{theorem}  \label{prop 20250223-1-noi}
Let $\mu$ be a probability measure supported on a compact set $K \subset \mathbb{R}^n$. Let $p: \mathbb{R}^n \to [1, \infty)$ be a Borel-measurable function such that
\begin{equation}
    p_+(K) := \sup_{x \in K} p(x) < \infty. \label{eq:p_plus_finite}
\end{equation}
Then, for a squashing function $\Psi$, 
the set $\Sigma(\Psi)$ is dense in the variable exponent Lebesgue space $L^{p(\cdot)}(\mu)$.
\end{theorem}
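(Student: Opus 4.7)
The plan is to adapt the classical Hahn--Banach/Riesz duality argument of Cybenko to the variable exponent setting, with Lemma \ref{lem:squashing} doing all the harmonic-analytic work. Assume toward a contradiction that $\Sigma(\Psi)$ is not dense in $L^{p(\cdot)}(\mu)$. Then one can pick $f_0 \in L^{p(\cdot)}(\mu) \setminus \overline{\Sigma(\Psi)}$ so that
\[
d := \inf_{u \in \overline{\Sigma(\Psi)}} \|f_0 - u\|_{L^{p(\cdot)}(\mu)} > 0,
\]
and apply the Hahn--Banach theorem recalled in the introduction, with $M = \overline{\Sigma(\Psi)}$, to obtain a bounded linear functional $F \in L^{p(\cdot)}(\mu)^*$ satisfying $F \equiv 0$ on $\Sigma(\Psi)$ and $F(f_0) = d \neq 0$.

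Next I would invoke the Riesz representation half of the duality lemma in Section \ref{s2.2} to produce $g \in L^{p'(\cdot)}(\mu)$, $g \not\equiv 0$, with $F = F_g$. The lemma requires $p_+ < \infty$ globally, whereas the hypothesis only gives $p_+(K) < \infty$; since $\mu$ is supported on $K$, however, $L^{p(\cdot)}(\mu)$ is unchanged if one replaces $p$ by $\min(p,\, p_+(K)+1)$, so this is harmless. Because $\chi_K$ lies in $L^{p(\cdot)}(\mu)$ (its modular equals $\mu(K) = 1$), H\"older's inequality (Lemma \ref{lem:250326-2}) yields $g = g\chi_K \in L^1(\mu)$, so
\[
\mathrm{d}\nu(x) := g(x)\,\mathrm{d}\mu(x)
\]
defines a non-zero finite Borel signed measure on $\mathbb{R}^n$.

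For any $w \in \mathbb{R}^n$ and $b \in \mathbb{R}$, the bounded measurable function $x \mapsto \Psi(w \cdot x + b)$ belongs to $L^{p(\cdot)}(\mu)$ and is an element of $\Sigma(\Psi)$ (take $N=1$, $w_1 = 1$); hence $F_g$ annihilates it, which reads
\[
\int_{\mathbb{R}^n} \Psi(w \cdot x + b)\,\mathrm{d}\nu(x) = 0
\qquad (w \in \mathbb{R}^n,\ b \in \mathbb{R}).
\]
Lemma \ref{lem:squashing} then forces $\nu = 0$, so $g = 0$ $\mu$-almost everywhere, contradicting $g \not\equiv 0$.

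The only potentially delicate steps, both routine, are (i) trimming $p$ outside $K$ so the duality lemma is applicable, and (ii) verifying that $g\,\mathrm{d}\mu$ is genuinely a \emph{finite} Borel signed measure, so that Lemma \ref{lem:squashing} may legitimately be invoked. The latter is precisely where the compactness of $K$ and $p_+(K) < \infty$ enter, via the membership $\chi_K \in L^{p(\cdot)}(\mu)$. Beyond these bookkeeping checks, the proof is a direct transcription of the classical Cybenko argument into the variable exponent framework.
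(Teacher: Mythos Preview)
Your argument is correct, but it follows a genuinely different route from the paper's own proof. The paper proceeds by \emph{direct approximation}: given $f\in L^{p(\cdot)}(\mu)$, it first approximates $f$ in norm by some $g\in C_{\rm c}(\mathbb{R}^n)$ (density of $C_{\rm c}$, Lemma~\ref{lemma:variable-dense}), then invokes Lemma~\ref{lemma 20250223-1-noi} to approximate $g$ \emph{uniformly on $K$} by some $h\in\Sigma(\Psi)$, and finally converts the uniform bound $\sup_K|g-h|\le\varepsilon/2$ into $\|g-h\|_{L^{p(\cdot)}(\mu)}\le\varepsilon/2$ via a one-line modular computation using $\mu(K)=1$. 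You instead run the Cybenko duality argument directly on $L^{p(\cdot)}(\mu)$, appealing to the Riesz representation for $(L^{p(\cdot)}(\mu))^*$ and then Lemma~\ref{lem:squashing} to annihilate the resulting signed measure.

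In terms of what each buys: the paper's approach is constructive and isolates all the harmonic analysis into Lemma~\ref{lemma 20250223-1-noi}, a statement in $C(K)$ that is independent of the target norm and hence immediately reusable for any function space in which $C_{\rm c}$ is dense and uniform bounds on $K$ control the norm. Your approach is shorter and more in the spirit of the paper's later Theorem~\ref{Theorem-main-Park}; it trades Lemma~\ref{lemma 20250223-1-noi} for Lemma~\ref{lem:squashing} and makes explicit use of the variable-exponent duality from Section~\ref{s2.2}. Both routes ultimately hide a Hahn--Banach step (the paper's is inside the proof of Lemma~\ref{lemma 20250223-1-noi}), so neither is more elementary in an absolute sense. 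Your bookkeeping remarks about trimming $p$ outside $K$ and checking $g\,\mathrm{d}\mu$ has finite total variation are exactly the points that need attention, and you handle them correctly.
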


In order to prove Theorem \ref{prop 20250223-1-noi}, 
we need the following two lemmas.

\begin{lemma}
\label{lemma:approx-conv}
Let $\rho \in C_{\mathrm{c}}(\mathbb{R})$ and let $\Psi$ be a squashing function. Then for every $\varepsilon > 0$ and every compact set $K \subset \mathbb{R}$, there exists a function $g \in \Sigma(\Psi)$ such that
\[
\sup_{x \in K} \left| \rho * \Psi(x) - g(x) \right| < \varepsilon.
\]
\end{lemma}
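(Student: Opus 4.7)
The plan is to show that the Riemann sums of the integral defining $\rho * \Psi$ already lie in $\Sigma(\Psi)$ and converge uniformly (in fact, uniformly on all of $\mathbb{R}$, not only on $K$). Since $\rho \in C_{\rm c}(\mathbb{R})$, choose $M > 0$ with $\operatorname{supp}\rho \subset [-M, M]$ and write
\[
\rho * \Psi(x) = \int_{-M}^{M} \rho(t)\,\Psi(x-t)\,{\rm d}t.
\]
For $N \in \mathbb{N}$ set $\Delta = 2M/N$, $t_j = -M + j\Delta$, and pick sample points $\xi_j \in [t_{j-1}, t_j]$. Define
\[
g_N(x) := \sum_{j=1}^{N} \rho(\xi_j)\,\Delta \cdot \Psi(x - \xi_j).
\]
By taking $a_j = 1 \in \mathbb{R}^{1}$, $b_j = -\xi_j$, and $w_j = \rho(\xi_j)\Delta$, we see that $g_N \in \Sigma(\Psi)$.

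The next step is to estimate $|\rho*\Psi(x) - g_N(x)|$ by splitting via the triangle inequality:
\[
|\rho*\Psi(x) - g_N(x)| \le \sum_{j=1}^{N}\int_{t_{j-1}}^{t_j}\!\!|\rho(t)-\rho(\xi_j)|\,\Psi(x-t)\,{\rm d}t + \sum_{j=1}^{N}\int_{t_{j-1}}^{t_j}\!\!|\rho(\xi_j)|\,|\Psi(x-t)-\Psi(x-\xi_j)|\,{\rm d}t.
\]
The first term is bounded by $2M\,\omega_\rho(\Delta)$, where $\omega_\rho$ is the modulus of continuity of $\rho$, since $0 \le \Psi \le 1$ and $\rho$ is uniformly continuous on its compact support.

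The main obstacle is the second sum, because $\Psi$ is merely monotone and may have countably many jumps, so no uniform continuity argument for $\Psi$ is available. The key observation is that since $\Psi$ is non-decreasing, $t \mapsto \Psi(x-t)$ is non-increasing, and for $t, \xi_j \in [t_{j-1}, t_j]$ we have
\[
|\Psi(x-t)-\Psi(x-\xi_j)| \le \Psi(x-t_{j-1}) - \Psi(x-t_j).
\]
This yields the telescoping estimate
\[
\sum_{j=1}^{N}\int_{t_{j-1}}^{t_j}|\Psi(x-t)-\Psi(x-\xi_j)|\,{\rm d}t \le \Delta\sum_{j=1}^{N}\bigl[\Psi(x-t_{j-1})-\Psi(x-t_j)\bigr] = \Delta\bigl[\Psi(x+M)-\Psi(x-M)\bigr] \le \Delta,
\]
so the second sum is at most $\|\rho\|_{\infty}\,\Delta$.

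Combining the two bounds yields
\[
\sup_{x\in\mathbb{R}}|\rho*\Psi(x) - g_N(x)| \le 2M\,\omega_\rho(\Delta) + \|\rho\|_{\infty}\,\Delta,
\]
which tends to $0$ as $N\to\infty$. Choosing $N$ large enough gives $\sup_{x \in K}|\rho * \Psi(x) - g_N(x)| < \varepsilon$. Notably, the convergence is uniform on all of $\mathbb{R}$, not merely on the prescribed compact set $K$.
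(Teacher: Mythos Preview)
Your proof is correct, and its overall architecture---Riemann sums for the convolution integral, followed by the same two-term triangle-inequality splitting---matches the paper's argument. The first term is handled identically in both via the uniform continuity of $\rho$ and the bound $0\le\Psi\le1$.

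The genuine difference lies in the second term. The paper restricts to a compact interval $K=[A,B]$, refines the partition $\{x-k/N\}$ of $[x-1,x]$ to a partition of $[A-1,B]$ of mesh at most $N^{-1}$, and then appeals to Darboux's theorem for the Riemann integrability of the monotone function $\Psi$ on $[A-1,B]$; this yields convergence that is uniform only on the prescribed compact set. Your argument instead exploits monotonicity directly: since $\Psi(x-t)$ and $\Psi(x-\xi_j)$ both lie in the interval $[\Psi(x-t_j),\Psi(x-t_{j-1})]$, the oscillation on each subinterval is bounded by $\Psi(x-t_{j-1})-\Psi(x-t_j)$, and the sum telescopes to at most $\Psi(x+M)-\Psi(x-M)\le 1$. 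This is both shorter and strictly stronger, producing a bound independent of $x$ and hence uniform convergence on all of $\mathbb{R}$, whereas the paper's Darboux-sum route only delivers uniformity on compacta. Your telescoping trick is the cleaner way to handle the lack of continuity of $\Psi$.
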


\begin{proof}
Due to the translation invariance of $\Sigma(\Psi)$,
by decomposing $\rho$ if necessary, we may assume that $\mathrm{supp}(\rho) \subset [0,1]$. Then for all $x \in K$, we can write
\[
\rho * \Psi(x) = \int_0^1 \rho(y) \Psi(x - y) \, \mathrm{d}y
= \lim_{N \to \infty} \frac{1}{N} \sum_{k = 1}^N \rho\left( \frac{k}{N} \right) \Psi\left( x - \frac{k}{N} \right).
\]
We aim to show that the convergence is uniform on $K$.

We estimate the error:
\begin{align*}
&\left| \rho * \Psi(x) - \frac{1}{N} \sum_{k = 1}^N \rho\left( \frac{k}{N} \right) \Psi\left( x - \frac{k}{N} \right) \right| \\
&\le \sum_{k = 1}^N \int_{\frac{k-1}{N}}^{\frac{k}{N}} \left| \rho(y)\Psi(x - y) - \rho\left( \frac{k}{N} \right) \Psi\left( x - \frac{k}{N} \right) \right| \, \mathrm{d}y \\
&\le \sum_{k = 1}^N \int_{\frac{k-1}{N}}^{\frac{k}{N}} \left| \rho(y) - \rho\left( \frac{k}{N} \right) \right| \left| \Psi(x - y) \right| \, \mathrm{d}y \\
&\quad + \sum_{k = 1}^N \int_{\frac{k-1}{N}}^{\frac{k}{N}} \left| \rho\left( \frac{k}{N} \right) \right| \left| \Psi(x - y) - \Psi\left( x - \frac{k}{N} \right) \right| \, \mathrm{d}y \\
&\le \sum_{k = 1}^N \int_{\frac{k-1}{N}}^{\frac{k}{N}} \left| \rho(y) - \rho\left( \frac{k}{N} \right) \right| \, \mathrm{d}y + \| \rho \|_{L^\infty} \sum_{k = 1}^N \int_{\frac{k-1}{N}}^{\frac{k}{N}} \left| \Psi(x - y) - \Psi\left( x - \frac{k}{N} \right) \right| \, \mathrm{d}y.
\end{align*}

Since $\rho$ is uniformly continuous on $[0,1]$, the first sum satisfies
\[
\sum_{k = 1}^N \int_{\frac{k-1}{N}}^{\frac{k}{N}} \left| \rho(y) - \rho\left( \frac{k}{N} \right) \right| \, \mathrm{d}y
\le \sup_{\substack{u,v \in [0,1]\\ |u - v| \le N^{-1}}} |\rho(u) - \rho(v)| = o(1),
\]
as $N \to \infty$.

Now, let $K = [A,B]$ for some integers $A,B \in \mathbb{Z}$
by expanding $K$ if necessary. For $x \in [A,B]$, we estimate the second sum:
\[
\sum_{k = 1}^N \int_{\frac{k-1}{N}}^{\frac{k}{N}} \left| \Psi(x - y) - \Psi\left( x - \frac{k}{N} \right) \right| \, \mathrm{d}y
= \sum_{k = 1}^N \int_{x - \frac{k}{N}}^{x - \frac{k-1}{N}} \left| \Psi(z) - \Psi\left( x - \frac{k}{N} \right) \right| \, \mathrm{d}z.
\]

Let $\{z_m\}_{m=1}^M$ be a partition of $[A-1, B]$ that refines $\{ A, x-1, x - \tfrac{1}{N}, \dots, x, B \}$ and has mesh at most $N^{-1}$. Then
\begin{align*}
\lefteqn{
\sum_{k = 1}^N \int_{\frac{k-1}{N}}^{\frac{k}{N}} \left| \Psi(x - y) - \Psi\left( x - \frac{k}{N} \right) \right| \, \mathrm{d}y
}\\
&\le \sum_{m=1}^M (z_m - z_{m-1}) \left( \sup_{w \in [z_{m-1}, z_m]} \Psi(w) - \inf_{w \in [z_{m-1}, z_m]} \Psi(w) \right),
\end{align*}
which tends to $0$ as $N \to \infty$ (via Darboux sums).

Thus, the convergence is uniform on $[A,B]$ due to Darboux's theorem, and there exists $N_0 \in \mathbb{N}$ such that
\[
\sup_{x \in [A,B]} \left| \rho * \Psi(x) - \frac{1}{N_0} \sum_{k = 1}^{N_0} \rho\left( \frac{k}{N_0} \right) \Psi\left( x - \frac{k}{N_0} \right) \right| < \varepsilon.
\]

Defining
\[
g(x) := \frac{1}{N_0} \sum_{k = 1}^{N_0} \rho\left( \frac{k}{N_0} \right) \Psi\left( x - \frac{k}{N_0} \right),
\]
we obtain the desired function $g \in \Sigma(\Psi)$.
\end{proof}

\begin{lemma}[{\cite[Theorem 2.4]{HSW1989}}] 
\label{lemma 20250223-1-noi}
Let $\mathcal{B}$ be the Borel $\sigma$-algebra of ${\mathbb R}^n$, 
${\Psi}$ be a squashing function, 
and let $\mu$ be a probability measure on $({\mathbb R}^n, \mathcal{B})$.
Then $\Sigma(\Psi)$ is uniformly dense on compacta in $C({\mathbb R}^n)$.
That is, for all compact subsets $K \subset {\mathbb R}^n$, all $f \in C({\mathbb R}^n)$, and all $\varepsilon > 0$, there exists 
$g \in \Sigma(\Psi)$ such that
\[
\sup_{x \in K} |f(x) - g(x)| < \varepsilon.
\]
\end{lemma}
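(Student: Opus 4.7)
The plan is to prove the lemma by contradiction, using the Cybenko-style functional-analytic approach sketched in the introduction together with Lemma \ref{lem:squashing}. The probability measure $\mu$ named in the hypothesis plays no role in the conclusion and may be ignored; it is a remnant of the original formulation in \cite{HSW1989}. The reduction I would make is to fix an arbitrary compact set $K \subset \mathbb{R}^n$ and view the restrictions $M := \{g|_K : g \in \Sigma(\Psi)\}$ as a linear subspace of the real Banach space $(C(K), \|\cdot\|_\infty)$. Once I show that $M$ is dense in $C(K)$, the stated conclusion for arbitrary $f \in C(\mathbb{R}^n)$ follows immediately by restricting $f$ to $K$.

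Assuming for contradiction that $M$ is not dense in $C(K)$, I would extract some $f_0 \in C(K)$ whose distance $d$ from $M$ is strictly positive, and apply the real-scalar analogue of the Hahn--Banach theorem stated in the introduction to produce a bounded linear functional $L : C(K) \to \mathbb{R}$ with $L|_M = 0$, $L(f_0) = d$, and $\|L\|_* = 1$. The Riesz--Markov--Kakutani representation theorem on the compact Hausdorff space $K$ then yields a finite signed Borel measure $\nu$ on $K$ such that
$$L(f) = \int_K f(x) \, d\nu(x) \qquad (f \in C(K)).$$
Extending $\nu$ by zero outside $K$ gives a compactly supported Borel signed measure $\tilde\mu$ on $\mathbb{R}^n$.

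For every $w \in \mathbb{R}^n$ and $b \in \mathbb{R}$, the function $x \mapsto \Psi(w \cdot x + b)$ is a one-term element of $\Sigma(\Psi)$ (take $N = 1$ and $w_1 = 1$ in Definition \ref{defi:250326-1}), so its restriction to $K$ lies in $M$ and is annihilated by $L$. Hence
$$\int_{\mathbb{R}^n} \Psi(w \cdot x + b) \, d\tilde\mu(x) = 0 \qquad (w \in \mathbb{R}^n, \ b \in \mathbb{R}).$$
Lemma \ref{lem:squashing} then forces $\tilde\mu = 0$, whence $L = 0$, contradicting $L(f_0) = d > 0$. This contradiction proves that $M$ is dense in $C(K)$, which is the desired assertion.

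The hard part is already handled by Lemma \ref{lem:squashing}, which carries out the Fourier-analytic argument; the remaining steps are essentially routine. Two small technical points deserve a line of verification: first, the Hahn--Banach statement in the introduction is formulated over the complex scalar field, but its real-scalar analogue applies verbatim because every element of $\Sigma(\Psi)$ is real-valued; second, the identification of $C(K)^*$ with the space of finite signed Borel measures on the compact set $K$ is exactly the content of the Riesz--Markov--Kakutani theorem.
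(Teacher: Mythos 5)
Your proposal is correct, but it follows a genuinely different route from the paper's own proof. You use the Cybenko-style scheme end to end: Hahn--Banach on $C(K)$, the Riesz--Markov--Kakutani representation to get a finite signed Borel measure annihilating every $x \mapsto \Psi(w\cdot x+b)$, and then you invoke Lemma \ref{lem:squashing} to conclude the measure vanishes; your observation that the probability measure $\mu$ in the statement is inert is also accurate, and the extension of $\nu$ by zero does produce a finite Borel signed measure to which Lemma \ref{lem:squashing} applies. The paper, by contrast, never uses Lemma \ref{lem:squashing} at this point: it first mollifies, replacing $\Psi$ by $\rho*\Psi$ with $\rho \in C^\infty_{\rm c}(\mathbb{R})$, reduces the problem to $\Sigma(\rho*\Psi)$ via the uniform-on-compacta approximation of Lemma \ref{lemma:approx-conv}, and then, after the same Hahn--Banach/measure step, differentiates $\int_K (\rho*\Psi)(w\cdot x+b)\,{\rm d}\mu(x)=0$ in $w$, chooses $b=b_\alpha$ with $(\rho*\Psi)^{(|\alpha|)}(b_\alpha)\neq 0$ to obtain vanishing moments $\int_K x^\alpha\,{\rm d}\mu(x)=0$, and concludes through the Taylor expansion of $e^{i\xi\cdot x}$ (legitimate since $K$ is compact) that the Fourier transform of the measure is zero. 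What your route buys is brevity: all the analytic work is delegated to Lemma \ref{lem:squashing}, whose proof in the paper is the delicate part. What the paper's route buys is independence from that lemma — the smoothing-plus-moments argument is self-contained and only needs elementary facts about squashing functions — at the cost of the extra reduction step through $\Sigma(\rho*\Psi)$, which is precisely why Lemma \ref{lemma:approx-conv} is proved there. Since Lemma \ref{lem:squashing} is established in the paper independently of the present lemma, your argument has no circularity, and your two technical remarks (real-scalar Hahn--Banach, identification of $C(K)^*$) are handled appropriately.
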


\begin{proof}
We provide an original proof once again.
Due to the translation invariance of $\Sigma(\Psi)$, we may assume $\Psi(0) \neq 0$. 
Let $\rho \in C^\infty_{\rm c}({\mathbb R})$ be a non-zero, non-negative function supported on a small interval, satisfying $\|\rho\|_{L^1} = 1$. 
Then, we note that $\rho * \Psi$ is a squashing function in $C^{\infty}({\mathbb R})$. 
Observe that from Lemma \ref{lemma:approx-conv},  it is sufficient to show that $\Sigma(\rho*\Psi)$ is uniformly dense on compacta in $C({\mathbb R}^n)$ to prove Lemma \ref{lemma 20250223-1-noi}. To show this, let us show that $\Sigma(\rho*\Psi)$ is dense in $C(K)$ for any compact set $K\subset {\mathbb R}^n$. 

Assume that a linear functional $I: C(K) \to {\mathbb C}$ annihilates $\overline{\Sigma(\rho*\Psi)}$, that is,
\[
I(f)=0 \quad (f \in \Sigma(\rho*\Psi)).
\]
Since $I$ is represented by a measure $\mu$ with bounded variation, it follows that
\begin{equation}\label{eq:convolution_integral_zero}
\int_K (\rho * \Psi)(w \cdot x + b) \mathrm{d}\mu(x) = 0, 
\end{equation}
where $w\in{\mathbb R}^n$ and $b\in{\mathbb R}$. 

Since $K$ is compact, differentiating \eqref{eq:convolution_integral_zero} with respect to $w$ gives
\begin{equation}\label{eq:derivative_integral_zero}
\int_K x^\alpha (\rho*\Psi)^{(|\alpha|)}(w \cdot x + b) \mathrm{d}\mu(x) = 0,
\end{equation}
for all multi-indices $\alpha$.
Since $\rho*\Psi$ is a squashing function, there exists $b = b_\alpha$ such that $(\rho*\Psi)^{(|\alpha|)}(b) \neq 0$.
Setting $b = b_\alpha$ and $w = 0$ in \eqref{eq:derivative_integral_zero}, we obtain
\[
\int_K x^\alpha \mathrm{d}\mu(x) = 0.
\]
Using the Taylor expansion of $e^{i \xi \cdot x}$, it follows that
\[
\int_K e^{i \xi \cdot x} \mathrm{d}\mu(x) = 0,
\]
for all $\xi \in {\mathbb R}^n$.
Thus, the Fourier transform of $\mu$ vanishes, implying that $\mu$ itself is identically zero and that $I(f)=0$ $(f\in C(K))$. 
This togather with the Hahn--Banach theorem shows that $\Sigma(\rho*\Psi)$ is dense in $C(K)$. 
\end{proof}

\begin{proof}[Proof of Theorem \ref{prop 20250223-1-noi}]
Take $f \in L^{p(\cdot)}({\rm d}\mu)$ arbitrary 
and let $\varepsilon > 0$. 
By virtue of 
Lemma \ref{lem:approx-convolution}, 
we can take
$g\in C_{\rm c}(\mathbb{R}^n)$ so that 
\begin{equation} \label{eq:density}
\|  f-g \|_{L^{p(\cdot)}({\rm d}\mu)}
\leq \frac{\varepsilon}{2}.
\end{equation}
By Lemma \ref{lemma 20250223-1-noi},
there exists $h \in \Sigma(\Psi)$ such that 
\begin{equation*} \label{eq:uniform_bound}
    \sup_{x \in K} |g(x) - h(x)| \leq \frac{\varepsilon}{2}.
\end{equation*}
This inequality and the assumption that $\mu$ is a probability measure with $\mu(K) = 1$ imply that for any $\lambda > 0$,
\begin{align*} \label{eq:modular_bound}
&\int_{{\mathbb R}^n} \left( \frac{|g(x) - h(x)|}{\lambda} \right)^{p(x)} {\rm d}\mu(x) \notag \\ 
&= 
\int_{{\mathbb R}^n \setminus K} \left( \frac{|g(x) - h(x)|}{\lambda} \right)^{p(x)} {\rm d}\mu(x)
+
\int_K \left( \frac{|g(x) - h(x)|}{\lambda} \right)^{p(x)} {\rm d}\mu(x)  \notag \\ 
&\leq \int_K \left( \frac{\varepsilon}{2\lambda} \right)^{p(x)} {\rm d}\mu(x). 
\end{align*}
Taking $\displaystyle \lambda=\frac{\varepsilon}{2}$, we obtain 
\begin{align*}
\int_{{\mathbb R}^n} \left( \frac{|g(x) - h(x)|}{\lambda} \right)^{p(x)} {\rm d}\mu(x) 
\leq \int_K \left( \frac{\varepsilon}{2\lambda} \right)^{p(x)} {\rm d}\mu(x)=1, 
\end{align*}
which leads us to
\begin{equation} \label{eq:modular_final}
\| g-h \|_{L^{p(\cdot)}({\rm d}\mu)}
\leq \lambda=\frac{\varepsilon}{2}.
\end{equation}
Combining \eqref{eq:density} and \eqref{eq:modular_final}, 
we conclude that 
$
\| f-h \|_{L^{p(\cdot)}({\rm d}\mu)}
\le  \varepsilon$.
\end{proof}

\subsection{Park--Sandberg \cite{Park1993} (1993)}
\label{s3.2}

To describe the result of Park and Sandberg,
we give the following definition:
\begin{definition}
Let $\phi : \mathbb{R}^n \to \mathbb{R}$ be an integrable function, i.e., $\phi \in L^1(\mathbb{R}^n)$. We define the set $S_1$ as the collection of all functions $f : \mathbb{R}^n \to \mathbb{R}$ that can be expressed in the form
\begin{equation}\label{eq:S1_definition}
    f(x) = \sum_{j=1}^{N} w_j \phi \left( \frac{x - z_j}{\sigma_j} \right), \quad x \in \mathbb{R}^n,
\end{equation}
where $N \in \mathbb{N}$, $\sigma_j > 0$, $w_j \in \mathbb{R}$, and $z_j \in \mathbb{R}^n$ for each $j = 1, 2, \dots, N$.
\end{definition}

We extend the result of \cite[Proposition 1]{Park1993} to the setting with a variable exponent.

\begin{theorem}\label{Theorem-main-Park}
    Let $\phi: \mathbb{R}^n \to \mathbb{R}$ be a function such that $\phi \in L^1 \cap L^{p(\cdot)}$ and satisfies
    \begin{equation}\label{eq:phi_integral}
        \int_{\mathbb{R}^n} \phi(x)\,{\rm d}x = 1.
    \end{equation}
    Suppose that $p(\cdot): \mathbb{R}^n \to (1,\infty)$ satisfies
    \begin{equation}\label{eq:p_bounds}
        1 < p_- \leq p_+ < \infty.
    \end{equation}
    Then the set $S_1$ is dense in $L^{p(\cdot)}$.
\end{theorem}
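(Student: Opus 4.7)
My plan is to use the Hahn--Banach theorem, combined with the Riesz representation for variable Lebesgue spaces, to reduce density of $S_1$ in $L^{p(\cdot)}$ to a vanishing statement that can be resolved by the approximate-identity lemma. Since $S_1$ is already a linear subspace, the form of Hahn--Banach stated in the excerpt tells us that $S_1$ is dense in $L^{p(\cdot)}$ if and only if the only bounded linear functional on $L^{p(\cdot)}$ annihilating $S_1$ is the zero functional. Because $p_+ < \infty$, part (II) of the duality lemma lets me represent every $F \in L^{p(\cdot)}(\mathrm{d}x)^*$ uniquely as $F = F_g$ for some $g \in L^{p'(\cdot)}$, so it suffices to show that any $g \in L^{p'(\cdot)}$ with $F_g|_{S_1} = 0$ must vanish almost everywhere.

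Given such a $g$, I would specialize the hypothesis $F_g(f) = 0$ to the one-term members $f(x) = \phi((x-z)/\sigma)$ of $S_1$, obtained by taking $N = 1$, $w_1 = 1$, $z_1 = z$, $\sigma_1 = \sigma$ in the defining formula \eqref{eq:S1_definition}. After the standard change of variables, this translates to the pointwise identity
\[
(\tilde{\phi}_\sigma * g)(z) = 0 \qquad (z \in \mathbb{R}^n,\; \sigma > 0),
\]
where $\tilde{\phi}(x) := \phi(-x)$ and $\tilde{\phi}_\sigma(x) := \sigma^{-n} \tilde{\phi}(x/\sigma)$. The key observations are that $\tilde{\phi} \in L^1(\mathbb{R}^n)$ and $\int_{\mathbb{R}^n} \tilde{\phi}\,\mathrm{d}x = \int_{\mathbb{R}^n} \phi\,\mathrm{d}x = 1$, so $\{\tilde{\phi}_\sigma\}_{\sigma > 0}$ is an approximate identity in the precise sense of Lemma \ref{lem:approx-convolution}.

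The final step is essentially automatic. The assumption $p_- > 1$ forces $(p')_+ = p_-/(p_- - 1) < \infty$, so Lemma \ref{lem:approx-convolution} applies with exponent $p'(\cdot)$ and yields
\[
\lim_{\sigma \to 0^+} \bigl\| \tilde{\phi}_\sigma * g - g \bigr\|_{L^{p'(\cdot)}} = 0.
\]
Combined with the pointwise identity $\tilde{\phi}_\sigma * g \equiv 0$, this forces $g = 0$ in $L^{p'(\cdot)}$, hence $F_g = 0$. Density of $S_1$ in $L^{p(\cdot)}$ then follows from Hahn--Banach.

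The main potential obstacle is the bookkeeping around well-definedness of the convolution: one must verify that each pairing $\int f g\,\mathrm{d}x$ makes sense (via H\"older's inequality, using that every single translate--dilate of $\phi$ lies in $L^{p(\cdot)}$ so that $S_1 \subset L^{p(\cdot)}$) and that $\tilde{\phi}_\sigma * g$ is genuinely an element of $L^{p'(\cdot)}$ to which the convergence lemma applies. Modulo these routine checks, the argument is very short, and it makes transparent why the hypothesis $p_- > 1$ is sharp: it is exactly what is needed to have $(p')_+ < \infty$ so that the approximate-identity convergence is available on the dual side.
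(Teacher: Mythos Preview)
Your proposal is correct and follows essentially the same route as the paper's proof: Hahn--Banach plus the Riesz representation for $L^{p(\cdot)}$ to obtain $g\in L^{p'(\cdot)}$, the observation that annihilation of $S_1$ forces $\tilde\phi_\sigma*g\equiv 0$, and then Lemma~\ref{lem:approx-convolution} applied on the dual side (using $(p')_+<\infty$) to conclude $g=0$. Your write-up is in fact slightly more explicit than the paper in noting why the hypothesis $p_->1$ enters and in flagging the membership $S_1\subset L^{p(\cdot)}$ as something to check.
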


\begin{proof}
Assume, for contradiction, that $\overline{S_1} \subsetneq L^{p(\cdot)}$.
By the Hahn--Banach theorem, there exists $T \in (L^{p(\cdot)})^*$ such that
\[ T(\overline{S_1}) = \{ 0 \}, \quad T(L^{p(\cdot)}) \neq \{ 0 \}. \]
By the Riesz representation theorem, there exists $g \in L^{p'(\cdot)}$ such that
\begin{equation}\label{eq:T-representation}
T(f) = \int_{\mathbb{R}^n} f(x)g(x){\rm d}x, \quad \text{for all } f \in L^{p(\cdot)}.
\end{equation}
Fix $z \in \mathbb{R}^n$ and $\sigma > 0$. Since $T(\overline{S_1}) = \{ 0 \}$, we have
\begin{equation}\label{eq:zero-integral}
\int_{\mathbb{R}^n} \sigma^{-n} \phi \left(  \frac{x-z}{\sigma}\right) g(x){\rm d}x = 0.
\end{equation}
Define the functions $\widetilde{\phi}$ and $\widetilde{\phi_\sigma}$ by
\[ \widetilde{\phi}(x) = 
\left( \int_{\mathbb{R}^n} \phi(y)\,{\rm d}y \right)^{-1} \phi(-x), \]
\[ \widetilde{\phi_\sigma}(x) = 
\sigma^{-n} \widetilde{\phi}\left( \frac{x}{\sigma} \right). \]
Then, we obtain
\begin{equation}\label{eq:convolution-zero}
(\widetilde{\phi_\sigma} * g)(z) = 
\left( \int_{\mathbb{R}^n} \phi(y)\, {\rm d}y \right)^{-1} \int_{\mathbb{R}^n} \sigma^{-n} \phi\left(  \frac{x-z}{\sigma}\right) g(x){\rm d}x = 0.
\end{equation}
Since $\widetilde{\phi} \in L^1$ and 
\[ \int_{\mathbb{R}^n}  \widetilde{\phi}(x){\rm d}x = 1, \]
Lemma \ref{lem:approx-convolution}
implies that
\begin{equation}\label{eq:approximation}
\lim_{\sigma \to +0} \left\| \widetilde{\phi_\sigma} * g - g  \right\|_{L^{p'(\cdot)}} = 0.
\end{equation}
Combining \eqref{eq:convolution-zero} and \eqref{eq:approximation}, we deduce that $g(x) = 0$ for almost every $x \in \mathbb{R}^n$, implying that $T(f) = 0$ for all $f \in L^{p(\cdot)}$. This contradicts $T(L^{p(\cdot)}) \neq \{ 0 \}$, completing the proof.
\end{proof}

\subsection{Approximation theorems in Herz spaces}
\label{s3.3}


The space $L^0({\mathbb R}^n)$ denotes the space of all equivalence classes of 
measurable functions modulo null functions.
Given $r>0$, we write
$B(r)=\{ x\in \mathbb{R}^n \, : \, |x|<r \}$.
For each
$k \in {\mathbb Z}$,
we set
${C}_k:=B(2^k) \setminus B(2^{k-1})$.
We write
$\chi_k=\chi_{C_k}$
for each $k \in {\mathbb Z}$.

\begin{definition}
Let
$p,q \in [1,\infty]$ and $\alpha\in \mathbb{R}$. 
The (non-homogeneous) Herz space $K_{p}^{\alpha,q}({\mathbb R}^n)$ consists of all 
$f \in L^0({\mathbb R}^n)$ for which
\[
\| f\|_{K_{p}^{\alpha,q}} := 
\| f\|_{L^{p}(B(1)) } 
+ 
\|  \{
2^{k\alpha} \|f\chi_k\|_{L^p}
\}_{k=1}^\infty
\|_{\ell^q(\mathbb{N})}
<\infty  .
\]
\end{definition}

Let $\mathcal{S}$ be the Schwartz class.
It is proved in \cite{FPL2017}
that $\mathcal{S} \subset K_{p}^{\alpha,q}({\mathbb R}^n)$.
Moreover the inclusion is dense,
provided that $p,q \in [1,\infty)$.
In order to obtain
a density theorem in Herz spaces
we need the following approximation.

\begin{lemma}[Theorem 3.6 in \cite{approxHerz2017}]
\label{lemma-Herz-approx}
Let $\phi: \mathbb{R}^n \to \mathbb{R}$ satisfy $\phi \in \mathcal{S}$
and $\displaystyle \int_{\mathbb{R}^n} \phi(x){\rm d}x=1$. 
Suppose that
$p,q \in [1,\infty)$ and $\alpha \in \mathbb{R}$.
Then we have
that for all $f\in K_{p}^{\alpha,q}({\mathbb R}^n)$,
\[
\lim_{\sigma \to +0} 
\left\| \phi_\sigma*f-f  \right\|_{K_{p}^{\alpha,q}({\mathbb R}^n)}  
=0.
\]
\end{lemma}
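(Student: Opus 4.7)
The plan is to argue by a three-$\varepsilon$ density argument, anchored on two ingredients: (i) density of the Schwartz class $\mathcal{S}$ in $K_{p}^{\alpha,q}({\mathbb R}^n)$, which is available from \cite{FPL2017} precisely because $p,q \in [1,\infty)$; and (ii) uniform boundedness of the operators $T_\sigma f := \phi_\sigma \ast f$ on $K_{p}^{\alpha,q}({\mathbb R}^n)$ for $\sigma \in (0,1]$. Once these two ingredients are in hand, given $f \in K_{p}^{\alpha,q}({\mathbb R}^n)$ and $\varepsilon > 0$, I would pick $g \in \mathcal{S}$ with $\|f-g\|_{K_{p}^{\alpha,q}} < \varepsilon$ and use
\[
\|\phi_\sigma \ast f - f\|_{K_{p}^{\alpha,q}} \le \|T_\sigma(f-g)\|_{K_{p}^{\alpha,q}} + \|T_\sigma g - g\|_{K_{p}^{\alpha,q}} + \|g-f\|_{K_{p}^{\alpha,q}}
\]
to conclude.

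For the uniform boundedness, I would decompose $f$ along the annular partition by writing $f = f\chi_{B(1)} + \sum_{j \ge 1} f \chi_j$, and estimate $\|(T_\sigma f)\chi_k\|_{L^{p}}$ by splitting the sum into near pieces $|j-k| \le 2$ and far pieces $|j-k| \ge 3$. On the near pieces Young's inequality gives $\|\phi_\sigma \ast (f\chi_j)\|_{L^{p}} \le \|\phi\|_{L^1} \|f\chi_j\|_{L^{p}}$, which is already good because $\|\phi_\sigma\|_{L^1} = \|\phi\|_{L^1}$ is independent of $\sigma$. On the far pieces I would exploit the Schwartz decay of $\phi$: for $x \in C_k$ and $y \in C_j$ with $|j-k| \ge 3$ one has $|x-y| \gtrsim 2^{\max(j,k)}$, and for $\sigma \in (0,1]$ the bound $|\phi_\sigma(u)| \le C_N (1+|u|)^{-N}$ holds uniformly in $\sigma$. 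This produces a rapidly decaying weight in $|j-k|$ which dominates the factor $2^{(k-j)\alpha}$ for any fixed $\alpha$. Summing in $\ell^q$ via a discrete Young inequality against a summable kernel on $\mathbb{Z}$ yields $\|T_\sigma f\|_{K_{p}^{\alpha,q}} \le C \|f\|_{K_{p}^{\alpha,q}}$ with $C$ independent of $\sigma$.

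For convergence on $\mathcal{S}$, I fix $g \in \mathcal{S}$ and note that $\phi_\sigma \ast g - g$ remains Schwartz with seminorms tending to zero as $\sigma \to 0^+$; this follows either by passing to Fourier transforms (since $\widehat{\phi}$ is Schwartz with $\widehat{\phi}(0) = 1$) or by a direct mean value estimate using a first-order moment of $\phi$. Consequently, for any $N$ one has $\|(\phi_\sigma \ast g - g)\chi_k\|_{L^{p}} \le C_N(g)\, 2^{-kN}$ uniformly in small $\sigma$, which provides an $\ell^q$-summable majorant after multiplication by $2^{k\alpha}$ once $N > |\alpha| + n/p$. Since each individual term $\|(\phi_\sigma \ast g - g)\chi_k\|_{L^{p}}$ tends to $0$ as $\sigma \to 0^+$, the dominated convergence theorem on $\ell^q({\mathbb N})$ finishes the convergence on $\mathcal{S}$.

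The main obstacle is the uniform boundedness step: the weight $2^{k\alpha}$ in the Herz norm is a two-sided exponential weight on the annuli, so when $\alpha$ is large one must be sure the Schwartz decay of $\phi_\sigma$ beats $2^{|k-j|\alpha}$ in both directions, and the bound must not degenerate as $\sigma \to 0^+$. The key observation that makes this work is that the Schwartz norm of $\phi_\sigma$ in the sense of the weight $(1+|u|)^{-N}$ is uniformly controlled for $\sigma \in (0,1]$, because shrinking $\sigma$ concentrates $\phi_\sigma$ near the origin and only improves far-field decay. Careful bookkeeping of the annular sums against this decay, separately in the near and far regimes, is the only genuinely technical ingredient.
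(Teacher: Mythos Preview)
The paper does not supply its own proof of this lemma; it is quoted verbatim as Theorem~3.6 of \cite{approxHerz2017} and used as a black box. So there is no argument in the present paper to compare your proposal against.

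On its own merits, your plan is the standard and correct one: density of $\mathcal{S}$ in $K_{p}^{\alpha,q}$ for $p,q\in[1,\infty)$ is exactly what the paper records from \cite{FPL2017}, and the three-$\varepsilon$ reduction to (i) uniform boundedness of $T_\sigma$ on $K_{p}^{\alpha,q}$ for $\sigma\in(0,1]$ and (ii) convergence on $\mathcal{S}$ is the natural route. The annular near/far splitting together with the discrete Young inequality in $\ell^q$ handles the Herz weight $2^{k\alpha}$ for every fixed $\alpha\in\mathbb{R}$, and your dominated-convergence argument for step (ii) is fine.

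One imprecision you should clean up: the bound ``$|\phi_\sigma(u)|\le C_N(1+|u|)^{-N}$ uniformly in $\sigma\in(0,1]$'' is false as stated, since $|\phi_\sigma(0)|=\sigma^{-n}\phi(0)$ blows up. What you actually need --- and what your far-piece computation uses --- is the estimate only in the regime $|u|\ge c>0$ relevant when $|j-k|\ge 3$: there
\[
|\phi_\sigma(u)|=\sigma^{-n}\bigl|\phi(u/\sigma)\bigr|\le C_N\,\sigma^{-n}(|u|/\sigma)^{-N}=C_N\,\sigma^{N-n}|u|^{-N}\le C_N\,|u|^{-N}
\]
for $N\ge n$ and $\sigma\le 1$, which then yields the rapidly decaying kernel in $|j-k|$ you want. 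The near-origin blowup is irrelevant because the near pieces $|j-k|\le 2$ are already handled by Young's inequality via $\|\phi_\sigma\|_{L^1}=\|\phi\|_{L^1}$. With that correction the proof goes through.
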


We note that the dual space of $K_{p}^{\alpha,q}(\mathbb{R}^n)$ is given by
\begin{equation}
    \label{eq:dual-space}
    \big(K_{p}^{\alpha,q}(\mathbb{R}^n)\big)^* = K_{p'}^{-\alpha,q'}(\mathbb{R}^n).
\end{equation}
Applying Lemma \ref{lemma-Herz-approx} in the proof of Theorem \ref{Theorem-main-Park}, we obtain a density theorem in non-homogeneous Herz spaces.

\begin{theorem}\label{theorem-density}
    Let $\phi: \mathbb{R}^n \to \mathbb{R}$ be a function satisfying $\phi \in \mathcal{S}(\mathbb{R}^n)$ and
    \begin{equation}
        \label{eq:phi-integral}
        \int_{\mathbb{R}^n} \phi(x){\rm d}x = 1.
    \end{equation}
    Suppose that $p, q \in (1,\infty)$ and $\alpha \in \mathbb{R}$. Then, the set $S_1$ is dense in $K_{p}^{\alpha,q}(\mathbb{R}^n)$.
\end{theorem}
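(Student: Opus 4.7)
The plan is to mimic, step for step, the Hahn--Banach argument used in the proof of Theorem \ref{Theorem-main-Park}, replacing the variable exponent Lebesgue space and its dual by the non-homogeneous Herz space $K_p^{\alpha,q}(\mathbb{R}^n)$ and its dual $K_{p'}^{-\alpha,q'}(\mathbb{R}^n)$ described in \eqref{eq:dual-space}. First, I would argue by contradiction: suppose $\overline{S_1}$ is a proper closed subspace of $K_p^{\alpha,q}(\mathbb{R}^n)$, and apply the Hahn--Banach theorem to produce a bounded linear functional $T \in (K_p^{\alpha,q})^*$ with $T(\overline{S_1}) = \{0\}$ and $T \not\equiv 0$. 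Using \eqref{eq:dual-space}, represent $T$ by an element $g \in K_{p'}^{-\alpha,q'}(\mathbb{R}^n)$ with $g \not\equiv 0$, via
\[
T(f) = \int_{\mathbb{R}^n} f(x) g(x)\,\mathrm{d}x.
\]

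Second, since every function of the form $x \mapsto \sigma^{-n}\phi((x-z)/\sigma)$ lies in $S_1$, the annihilation condition gives
\[
\int_{\mathbb{R}^n} \sigma^{-n} \phi\!\left(\frac{x-z}{\sigma}\right) g(x)\,\mathrm{d}x = 0
\]
for all $z \in \mathbb{R}^n$ and $\sigma > 0$. Setting $\widetilde{\phi}(x) := \phi(-x)$ (noting that $\int_{\mathbb{R}^n} \widetilde{\phi}(x)\,\mathrm{d}x = 1$ and $\widetilde{\phi} \in \mathcal{S}(\mathbb{R}^n)$ because $\phi \in \mathcal{S}(\mathbb{R}^n)$), and defining $\widetilde{\phi_\sigma}(x) := \sigma^{-n}\widetilde{\phi}(x/\sigma)$, the identity above translates into
\[
(\widetilde{\phi_\sigma} * g)(z) = 0 \qquad (z \in \mathbb{R}^n,\ \sigma > 0).
\]

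Third, I would invoke Lemma \ref{lemma-Herz-approx} applied to $g$ in the dual Herz space. The hypothesis $p,q \in (1,\infty)$ ensures $p',q' \in (1,\infty) \subset [1,\infty)$, so the lemma's assumptions are satisfied with parameters $(p',-\alpha,q')$. Consequently,
\[
\lim_{\sigma \to 0^+} \|\widetilde{\phi_\sigma} * g - g\|_{K_{p'}^{-\alpha,q'}(\mathbb{R}^n)} = 0,
\]
and combined with the identity $\widetilde{\phi_\sigma}*g \equiv 0$ this forces $g = 0$ in $K_{p'}^{-\alpha,q'}(\mathbb{R}^n)$, which contradicts $T \not\equiv 0$.

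The only genuine point that needs attention, rather than being a real obstacle, is verifying that the approximation Lemma \ref{lemma-Herz-approx} is applicable to the dual parameters $(p', -\alpha, q')$; this is immediate from the range assumption $p,q \in (1,\infty)$ and the fact that $\widetilde{\phi}$ inherits Schwartz regularity and unit mass from $\phi$. Beyond that, the argument is a formal transcription of the proof of Theorem \ref{Theorem-main-Park} into the Herz setting, using the duality \eqref{eq:dual-space} in place of the duality for variable exponent Lebesgue spaces.
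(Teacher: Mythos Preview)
Your proposal is correct and is exactly the approach the paper indicates: the paper's proof consists of the single remark that one applies Lemma~\ref{lemma-Herz-approx} inside the Hahn--Banach/duality argument of Theorem~\ref{Theorem-main-Park}, and you have faithfully written out those details, including the key observation that $p,q\in(1,\infty)$ guarantees the dual parameters $(p',-\alpha,q')$ fall within the range of Lemma~\ref{lemma-Herz-approx}.
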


\subsection{A modular inequality in $L^{p(\cdot)}$}
\label{s3.4}

Thanks to Lemma \ref{lem:approx-convolution}, we can approximate each function $f \in L^{p(\cdot)}$ using the convolution $\phi_\sigma * f$. In this section, we study the boundedness of the operator
\begin{equation}\label{eq:convolution_operator}
\phi_\sigma* : f  \mapsto  \phi_\sigma * f
\end{equation}
on $L^{p(\cdot)}$.

Given a measurable function $f$ on $\mathbb{R}^n$, we define the Hardy--Littlewood maximal operator $M$ by
\begin{equation}\label{eq:maximal_operator}
M f(x) := \sup\limits_{r>0} \frac{1}{r^n} \int_{|y-x|<r} |f(y)|{\rm d}y, \quad x \in \mathbb{R}^n.
\end{equation}
The classical Hardy--Littlewood maximal theorem states that $M$ is bounded on $L^p$ for $1 < p \leq \infty$. Moreover, $M$ is also bounded on $L^{p(\cdot)}$ provided that the function $p(\cdot)$ satisfies the following conditions:
\begin{align}
\left| \frac{1}{p(x)} - \frac{1}{p(y)} \right| &\le \frac{C}{-\log(|x-y|)}, \quad \text{for } |x-y| \leq \frac{1}{2}, \label{eq:p_condition1} \\
\left| \frac{1}{p(x)} - \frac{1}{p(y)} \right| &\le \frac{C}{\log(e + |x|)}, \quad \text{for } |y| \geq |x|. \label{eq:p_condition2}
\end{align}
For details, we refer to \cite{CDF2009,CFN2003,Diening2004,DHHMS}.

The boundedness of $M$ on $L^{p(\cdot)}$ implies the norm inequality
\begin{equation}\label{eq:M_bounded}
\| M f \|_{L^{p(\cdot)}} \leq C \| f \|_{L^{p(\cdot)}} \quad \text{for all } f \in L^{p(\cdot)}.
\end{equation}
Lerner \cite{Lerner-modular} showed that if $M$ satisfies the modular inequality
\begin{equation}\label{eq:M_modular}
\int_{\mathbb{R}^n} \{ M f(x) \}^{p(x)} {\rm d}x \leq 
C \int_{\mathbb{R}^n} |f(x)|^{p(x)} {\rm d}x
\quad \text{for all } f \in L^{p(\cdot)},
\end{equation}
then $p(\cdot)$ must be a constant. Note that the norm inequality \eqref{eq:M_bounded} and the modular inequality \eqref{eq:M_modular} are equivalent
if $p(\cdot)$ is a constant. This result highlights a fundamental difference between constant and variable exponent Lebesgue spaces.

Suppose $\phi \in L^1$ and $\int_{\mathbb{R}^n} \phi(x) {\rm d}x = 1$. 
To study the modular inequality for the operator $\phi_\sigma*$, we introduce the following class of functions:

\begin{definition}
A function $\Phi$ is called \emph{radial decreasing} if it satisfies $\Phi(x) \geq \Phi(y) \geq 0$ for all $x, y \in \mathbb{R}^n$ with $|x| \leq |y|$. The class $\mathcal{RB}$ consists of functions $\phi$ for which there exists a radial decreasing function $\Phi$ such that $|\phi(x)| \leq \Phi(x)$
for all $x \in {\mathbb R}^n$, where $\Phi(0) < \infty$ and $\Phi \in L^1$.
\end{definition}

Assuming $\phi \in \mathcal{RB}$, it follows from \cite[Proposition 2.7]{Duoa} that
\begin{equation}\label{eq:phi_maximal_bound}
\sup_{\sigma > 0} |\phi_\sigma * f(x)| \leq C M f(x) \quad \text{for all } x \in \mathbb{R}^n.
\end{equation}
Thus, if $M$ is bounded on $L^{p(\cdot)}$, then $\phi_\sigma*$ is also bounded on $L^{p(\cdot)}$ with an operator norm independent of $\sigma$. This leads to the following generalization of Lerner's result.

\begin{theorem}\label{thm:phi_modular}
Let $p(\cdot) : \mathbb{R}^n \to (1,\infty)$ be a measurable function. Suppose $\phi \in L^1 \cap \mathcal{RB}$, satisfying
\begin{itemize}
    \item $\int_{\mathbb{R}^n} \phi(x) \, {\rm d}x = 1$,
    \item $\phi(0) > 0$,
    \item $\phi$ is continuous on $\mathbb{R}^n$.
\end{itemize}
If the modular inequality
\begin{equation}\label{eq:phi_modular}
    \int_{\mathbb{R}^n} |\phi_\sigma * f(x)|^{p(x)} \, {\rm d}x 
    \leq C \int_{\mathbb{R}^n} |f(x)|^{p(x)} \, {\rm d}x,
\end{equation}
holds for all $f \in L^{p(\cdot)}$ and all $\sigma > 0$, then $p(\cdot)$ must be constant.
\end{theorem}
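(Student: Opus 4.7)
The strategy is to adapt Lerner's test-function argument from the maximal operator setting to the convolution setting, using $\phi(0)>0$ and the continuity of $\phi$ to produce the pointwise lower bound that Lerner obtains from the maximal function. Assume for contradiction that $p(\cdot)$ is not constant, and pick essential values $a<b$ of $p(\cdot)$; set $p_0:=(3a+b)/4$ and $p_1:=(a+3b)/4$, so that $1<p_0<p_1<\infty$. Intersecting $\{p\le p_0\}$ and $\{p\ge p_1\}$ with a sufficiently large ball $B(R)$ yields disjoint sets $E_0,E_1$ of positive finite measure such that $p(x)\le p_0$ on $E_0$ and $p(x)\ge p_1$ on $E_1$. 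Both are contained in $B(R)$.

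Next, I would plug the test function $f:=t\chi_{E_0}$ (with $t\ge 1$) into the assumed modular inequality. The right-hand side is immediately controlled by
\[
\int_{\mathbb{R}^n}|f(x)|^{p(x)}\,{\rm d}x=\int_{E_0}t^{p(x)}\,{\rm d}x\le |E_0|\,t^{p_0}.
\]
For the left-hand side, use the continuity of $\phi$ at $0$ to pick $\delta>0$ with $\phi(z)\ge \phi(0)/2$ on $\{|z|\le\delta\}$, and then fix $\sigma$ so large that $2R/\sigma\le\delta$. Then $|(x-y)/\sigma|\le\delta$ for every $x\in E_1$ and $y\in E_0$, which gives the pointwise bound
\[
\phi_\sigma*f(x)\ge \frac{t\,\phi(0)\,|E_0|}{2\sigma^n}=:A(t)\qquad(x\in E_1).
\]
Once $t$ is large enough that $A(t)\ge 1$, monotonicity of $a^p$ in $p$ for $a\ge 1$ together with $p(x)\ge p_1$ on $E_1$ yields
\[
\int_{\mathbb{R}^n}|\phi_\sigma*f(x)|^{p(x)}\,{\rm d}x\ge |E_1|\,A(t)^{p_1}.
\]
Combining these two estimates with the assumed modular inequality produces
\[
|E_1|\left(\frac{\phi(0)|E_0|}{2\sigma^n}\right)^{p_1}t^{p_1}\le C|E_0|\,t^{p_0},
\]
so $t^{p_1-p_0}$ is bounded by a constant independent of $t$, which contradicts $p_1>p_0$ as $t\to\infty$.

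The main obstacle is the ordering of parameter choices: $\sigma$ must be fixed first and taken large so that the kernel $\sigma^{-n}\phi(\cdot/\sigma)$ is essentially constant and bounded below by $\phi(0)/(2\sigma^n)$ over the difference set $E_1-E_0$; only after $\sigma$ is locked in can the amplitude $t$ be sent to infinity to exploit the strict inequality $p_1>p_0$. The positivity of $\phi$ at the origin, together with continuity, is precisely what replaces the role of the Hardy--Littlewood maximal function in Lerner's original argument and enables the pointwise lower bound on $E_1$.
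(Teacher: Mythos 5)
Your argument is correct and is essentially the paper's proof: both test the modular inequality on a large multiple of the characteristic function of a set where the exponent is small, use $\phi(0)>0$ and the continuity of $\phi$ to bound $\phi_\sigma*f$ from below on a set where the exponent is large (taking $\sigma$ so large that the rescaled kernel stays comparable to $\phi(0)$ on the relevant difference set), and then send the amplitude to infinity to contradict the gap between the two exponent levels. The only notable difference is in how the two sets are produced: you take level sets attached to two essential values of $p(\cdot)$ intersected with a large ball $B(R)$ and then choose $\sigma\ge 2R/\delta$, whereas the paper works with $E=\{p>p_+-\varepsilon\}$ and $F=\{p<p_-+\varepsilon\}$, localizes them at Lebesgue density points $y_0,x_0$ in balls of radius $r<2^{-j}$, and takes $\sigma>2^{j}(|x_0-y_0|+2r)$ -- your version dispenses with the Lebesgue differentiation step (and also covers the case $p_+=\infty$, which the paper's choice $\varepsilon=\tfrac13(p_+-p_-)$ implicitly excludes) at no extra cost, since the hypothesis allows arbitrarily large $\sigma$.
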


\begin{proof}
The proof follows arguments found in \cite{ABM,INS2015}. 
Assuming the modular inequality
$(\ref{eq:phi_modular})$
holds for all $f \in L^{p(\cdot)}$ and all $\sigma > 0$, we derive a contradiction under the assumption that $p(\cdot)$ is not constant.

By the continuity of $\phi$ and the assumption $\phi(0) > 0$, there exists a constant $C_0 > 0$ and $j \gg 1$ such that
\begin{equation}
\phi(z) > C_0, \quad 
\text{for } |z| < 2^{-j}.
\end{equation}

If $p(\cdot)$ is not constant, then $p_+ > p_-$ holds. Define
\[
    \varepsilon := \frac{1}{3} (p_+ - p_-), \quad E := \{x \in \mathbb{R}^n : p_+ - \varepsilon < p(x)\}, \quad F := \{x \in \mathbb{R}^n : p_- + \varepsilon > p(x)\}.
\]
Note that $\varepsilon > 0$, $|E| > 0$ and $|F| > 0$.
By the Lebesgue differentiation theorem,
\[
\lim_{r \to +0}
\frac{|B(y_0,r) \cap E|}{|B(y_0,r)|}=1
\]
for almost all $y_0 \in E$ and
\[
\lim_{r \to +0}
\frac{|B(x_0,r) \cap F|}{|B(x_0,r)|}=1
\]
for almost all
$x_0 \in F$. 
Choose $r\in(0,2^{-j})$, $y_0 \in E$ and $x_0 \in F$ so that
\[
\frac{|B(y_0,r) \cap E|}{|B(y_0,r)|}>\frac12, \quad 
\frac{|B(x_0,r) \cap F|}{|B(x_0,r)|}>\frac12.
\]
Then $|E \cap B(y_0,r)| > 0$ and $|F \cap B(x_0,r)| > 0$
in particular. We additionally define
\[
U := E \cap B(y_0,r), \quad V := F \cap B(x_0,r).
\]

Assume that $\sigma>2^{j}(|x_0-y_0|+2r)$.
Then
\[
    \phi_\sigma*\chi_V(x) = 
\frac{1}{\sigma^n}\int_V \phi\left(\frac{x - y}{\sigma}\right) \, {\rm d}y \geq\frac{C_0|V|}{\sigma^n}, \quad x \in U.
\]
since
\[
    |x - y| \leq |x - y_0| + |y_0 - x_0| + |x_0 - y| \leq 2r + |y_0 - x_0|,
\]
for every $x \in U$ and $y \in V$. 

Taking an arbitrary constant $R > 1$, we obtain
\begin{align*}
    R^{p_+ - \varepsilon} \int_U 
    \left( \frac{C_0|V|}{\sigma^n} \right)^{p(x)} \, {\rm d}x 
    &\leq \int_U \left( C_0 R |V| \right)^{p(x)} \, {\rm d}x \\
    &\leq \int_U 
\left( \frac{R C_0|V|}{\sigma^n} \right)^{p(x)} \, {\rm d}x \\
    &\leq \int_{\mathbb{R}^n} \left( R P_j (\chi_V)(x) \right)^{p(x)} 
    \, {\rm d}x.
\end{align*}

Applying inequality \eqref{eq:phi_modular}, we get
\begin{align*}
R^{p_+ - \varepsilon} \int_U \left( C_0 2^{-n(j+1)} \right)^{p(x)} \, {\rm d}x \leq C \int_{\mathbb{R}^n} \left( R \chi_V(x) \right)^{p(x)} \, {\rm d}x 
= C \int_V R^{p(x)} \, {\rm d}x 
\leq C R^{p_- + \varepsilon} |V|.
\end{align*}
This contradicts the fact that $p_+ - \varepsilon > p_- + \varepsilon$. Hence, we conclude that $p_+ = p_-$, which means that the variable exponent $p(x)$ is constant.
\end{proof}

\section*{Acknowledgment}
This work was partly supported by MEXT Promotion of Distinctive Joint Research Center Program JPMXP0723833165 
and 
Tokyo City University Start-up Support for Interdisciplinary Research.
 This work was supported by Grant-in-Aid for Scientific Research (C) (19K03546,
 23K03156), the Japan
 Society for the Promotion of Science (Sawano).


\begin{thebibliography}{999}




\bibitem{ABM}
H. A. Aimar, A. L. Bernardis and F. J. Mart\'{i}n-Reyes,
Multiresolution approximations 
and wavelet bases of weighted $L^p$ spaces,
J. Fourier Anal. Appl. {\bf 9} (2003), 497--510.



\bibitem{Schwartz2021}
B. \'{A}lvarez-Samaniego, W. P. \'{A}lvarez-Samaniego and
D. Llerena-Montenegro,
Approximate identities for the Schwartz space,
Analysis and Mathematical Physics
{\bf 11} (2021), article number 21.








\bibitem{arxiv2020}
\'{A}. Capel and J. Oc\'{a}riz,
Approximation with neural networks in variable Lebesgue spaces,
preprint, 
arXiv:2007.04166



\bibitem{CDF2009}
D Cruz-Uribe, L Diening and A Fiorenza,
A new proof of the boundedness of maximal operators 
on variable Lebesgue spaces,
Boll. Unione Mat. Ital. {\bf (9) 2 (1)} (2009), 151--173.





\bibitem{CF-book} 
D. V. Cruz-Uribe and A. Fiorenza,
Variable Lebesgue spaces. Applied and Numerical Harmonic Analysis. 
Birkh\"{a}user/Springer, Heidelberg, 2013. 
Foundations and harmonic analysis.



\bibitem{CFN2003}
D. Cruz-Uribe, A. Fiorenza, and C. J. Neugebauer, 
The maximal function on variable $L^p$ spaces,
Ann. Acad. Sci. Fenn. Math. {\bf 28} (2003), 223--238; 
{\bf 29} (2004), 247--249.






\bibitem{Cybenko}
G. Cybenko, 
Approximation by superpositions of a sigmoidal function,
Mathematics of control, signals and systems, 
{\bf 2} (1989), 303--314.







\bibitem{Diening2004}
L. Diening, Maximal function on generalized Lebesgue spaces 
$L^{p(\cdot)}$, Math. Inequal. Appl. {\bf 7} (2004),  245--253.



\bibitem{DHHMS}
L. Diening, P. Harjulehto, P.\,H\"ast\"o, Y. Mizuta and T. Shimomura,
Maximal functions in variable exponent spaces:
limiting cases of the exponent,
Annal. Acad. Sci. Fenn. Math. {\bf 34} (2009), 503--522.



\bibitem{Duoa}
J. Duoandikoetxea, Fourier Analysis, 
Amer. Math. Soc., Providence, RI, 2001.



\bibitem{eldar2012compressed}
Y. C. Eldar and G. Kutyniok, (Eds.), 
Compressed Sensing: Theory and Applications, 
Cambridge university press, 2012.

\bibitem{FPL2017}
L. C. F. Ferreira and J. E. P\'{e}rez-L\'{o}pez, 
On the theory of Besov--Herz spaces and Euler equations, 
Israel J. Math. {\bf 220} (2017), 283--332.





\bibitem{Hornik1991}
K. Hornik, 
Approximation capabilities of muitilayer feedforward networks,
Neural Networks {\bf 4} (1991), 251--257.






\bibitem{HSW1989}
K. Hornik, M. Stinchcombe and H. White,
Multilayer feedforward networks are universal approximators,
Neural Networks {\bf 2} (1989), 359--366.




\bibitem{INS2015}
M. Izuki, E. Nakai and Y. Sawano,
Wavelet characterization and modular inequalties for weighted 
Lebesgue spaces with variable exponent,
Annal. Acad. Sci. Fenn. Math. {\bf 40} (2015), 551--571.



\bibitem{Lerner-modular}
A. K. Lerner, On modular inequalities in variable $L^p$ spaces, 
Arch. Math. {\bf 85} (2005), 538--543.








\bibitem{MMO2010}
F.-Y. Maeda, Y. Mizuta and T. Ohno,
Approximate identities and Young type inequaltities in
variable Lebesgue-Orlicz spaces
$L^{p(\cdot)}(\log L)^{q(\cdot)}$,
Annal. Acad. Sci. Fenn. Math. {\bf 35} (2010), 405--420.


\bibitem{MMOS2013}
F.-Y. Maeda, Y. Mizuta, T. Ohno and T. Shimomura,
Approximate identities and Young type inequalities 
in Musielak-Orlicz spaces, 
Czechoslovak Math. J. {\bf 63 (138)} (2013), 933--948.


\bibitem{OS}
M. Okada and Y. Sawano, 
Weighted Hankel transform and its applications to Fourier transform,
J. Fourier Anal. Appl. {\bf 27} (2021), article number 23.
 
 




\bibitem{Park1993}
J. Park and I. W. Sandberg, 
Approximation and radial-basis-function networks, 
Neural Computation {\bf 5} (1993), 305--316.


\bibitem{approxHerz2017}
J. E. P\'{e}rez-L\'{o}pez,
Approximation properties on Herz spaces,
Revista Integraci\'{o}n {\bf 35} (2017), 215--223.




\bibitem{simon2006optimal}
D. Simon,  
Optimal State Estimation: Kalman, $H_{\infty}$, and Nonlinear Approaches, John Wiley \& Sons, 2006.


\bibitem{Yeh}
J. Yeh,
Real Analysis : Theory of Measure and Integration,
3rd edition,
New Jersey, World Scientific, 2014.

\end{thebibliography}
\end{document}